\DeclareMathAlphabet{\mathpzc}{OT1}{pzc}{m}{it}
\DeclareFontFamily{U}{mathx}{\hyphenchar\font45}
\DeclareFontShape{U}{mathx}{m}{n}{
      <5> <6> <7> <8> <9> <10>
      <10.95> <12> <14.4> <17.28> <20.74> <24.88>
      mathx10
      }{}
\DeclareSymbolFont{mathx}{U}{mathx}{m}{n}
\DeclareMathAccent{\widehat}{5}{mathx}{"70}
\DeclareMathAccent{\widecheck}{5}{mathx}{"71}
\DeclareMathAccent{\widetilde}{5}{mathx}{"72}
\newtheorem{theorem}{Theorem}[section]
\newtheorem{lemma}[theorem]{Lemma}
\newtheorem{remark}[theorem]{Remark}
\def\Holder{{H\"{o}lder}}
\def\bdy #1{\partial #1}
\def\bbN{{\mathbb N}}
\def\bbR{{\mathbb R}}
\def\bbS{{\mathbb S}}
\def\bbZ{{\mathbb Z}}
\def\div{{\operatorname{div}}}
\def\curl{{\operatorname{curl}}}
\def\id{{\text{Id}}}
\def\ft #1{{\widehat{#1}}}
\def\contsubset{\hspace{1pt}{\hookrightarrow}\hspace{1pt}}
\def\D{{\mathcal D}}
\def\N{{\mathcal N}}
\def\O{{\mathcal O}}
\def\P{{\mathcal P}}
\def\V{{\mathcal V}}
\def\rA{{\rm A}}
\def\rG{{\rm G}}
\def\rH{{\rm H}}
\def\rJ{{\rm J}}
\def\rK{{\rm K}}
\def\rN{{\rm N}}
\def\rT{{\rm T}}
\def\rg{{\rm g}}
\def\rn{{\rm n}}
\def\rr{{\rm r}}
\def\rw{{\rm w}}
\def\tv{{\tilde{v}}}
\def\tth{{{\bf h}}}
\def\ttJ{{{\bf J}}}
\def\ttq{{{\bf q}}}
\def\ttv{{{\bf v}}}
\def\Nhh{{N(\|\tth_0\|_{H^\rK(\bbS^1)}, \triplenorm{\tth}_T)}}
\def\bp{{\bar{\partial}}}
\def\p{{\partial}}
\def\Forall{\forall\hspace{2pt}}
\def\({{(\hspace{-2pt}(}}
\def\){{)\hspace{-2pt})}}
\def\triplenorm#1{{|\hspace{-1.2pt}|\hspace{-1.2pt}|{#1}|\hspace{-1.2pt}|\hspace{-1.2pt}|}}
\def\pprime{{\hspace{1pt}\prime}}
\def\XXint#1#2#3{{\setbox0=\hbox{$#1{#2#3}{\int}$}
\vcenter{\hbox{$#2#3$}}\kern-.5\wd0}}
\title[Decay for solutions of Hele-Shaw with injection]{Global existence and decay for solutions of the Hele-Shaw flow with injection}
\author[C.H. A. Cheng]{C.H. Arthur Cheng}
\email{cchsiao@math.ncu.edu.tw}
\address{Department of Mathematics, National Central University, Jhongli City, Taoyuan County, 32001, Taiwan ROC}
\author[D. Coutand]{Daniel Coutand} 
\address{CANPDE, Maxwell Institute for Mathematical Sciences and department of Mathematics, 
Heriot-Watt University, Edinburgh, EH14 4AS, UK} 
\email{D.Coutand@ma.hw.ac.uk}
\author[S. Shkoller]{Steve Shkoller}
\email{shkoller@math.ucdavis.edu}
\address{Department of Mathematics, University of California, Davis, CA 95616, USA}
\subjclass{35R35, 35K55, 76D27}
\keywords{Hele-Shaw, interface problems, stability, Hele-Shaw free boundary problems, surface tension}
\begin{document}

\begin{abstract}
We study the global existence and decay to spherical equilibrium  of Hele-Shaw flows with surface tension.   We prove that without injection of  fluid,  perturbations of the sphere  decay to zero exponentially fast.  On the other hand, with a time-dependent rate of fluid injection into the Hele-Shaw cell,  the distance from the moving boundary
to an expanding sphere (with time-dependent radius)  also decays to zero but with an algebraic rate, which  depends on the injection rate of the fluid.
\end{abstract}

\maketitle
{\small
\tableofcontents}

\section{Introduction}\label{sec_introduction}
\subsection{The problem statement}
We establish decay estimates for solutions of the Hele-Shaw equations with surface tension on the free-boundary, and with injection of fluid into the cell. With the time-dependent fluid domain denoted by $\Omega(t)$, an open subset of $\bbR^2$ with boundary $\Gamma(t)$, and for time $t \in [0,T]$, the
two-dimensional Hele-Shaw equations are given by
\begin{subequations}\label{HS_Eulerian1}
\begin{alignat}{2}
\Delta p &= -\mu\delta &&\text{in} \quad \Omega(t) \,,\\
p &= \rH \qquad&&\text{on} \quad \Gamma(t)\,,\\
\V(\Gamma(t)) &= -\frac{\p p}{\p \rn}\qquad &&\text{on}\quad\Gamma(t)\,,\\
\Omega(0) &= \Omega\,,
\end{alignat}
\end{subequations}
where $\delta$ is the Dirac delta function at the origin, $p(x,t)$ denotes the fluid pressure, $\mu = \mu(t)$ denotes the rate of injection of fluid if $\mu \ge 0$ (or suction if $\mu\le 0$), and $\rH$ is the (mean) curvature of the evolving free-boundary $\Gamma(t)$. We use $\V(\Gamma(t))$ to denote the normal velocity of the moving free-boundary $\Gamma(t)$, and we let $n$ denote the outward-pointing unit normal on $\Gamma(t)$.

When the injection rate $\mu \not\equiv 0$, the volume of the fluid domain $|\Omega(t)|$ can be computed as
\begin{align}
|\Omega(t)| = \pi + \int_0^t \mu(s) ds \equiv \pi \rho(t)^2, \label{defn:rho}
\end{align}
where $\displaystyle{}\rho(t) \equiv \sqrt{1 + \int_0^t \frac{\mu(s)}{\pi} ds} >0$ is the radius of a ball centered at the origin.
In fact, if the initial domain $\Omega = B_1 \equiv B(0,1)$, then the solution to (\ref{HS_Eulerian1}) is given explicitly by $\Omega(t) = B(0,\rho(t))$, with pressure function
$$
p(x,t) = \frac{1}{\rho(t)} + \frac{\mu(t)}{2\pi} \log \frac{|x|}{\rho(t)}.
$$
We will show that under certain growth conditions on the injection rate $\mu(t)$, if
\begin{enumerate}
\item the initial domain $\Omega$ is sufficiently close to the unit ball $B_1 \equiv B(0,1)$ in $\bbR^2$ with $|\Omega| = |B_1| = \pi$;
\item the center of mass of $\Omega$ is sufficiently close to the origin (which is the point of injection),
\end{enumerate}
then $\Omega(t)$ converges to $B_\rho \equiv B(0,\rho(t))$ as $t\to \infty$. The precise statement of our result is given below in Theorem \ref{thm:main_thm1}.

Of fundamental importance to our analysis is the conversion of the second-order Poisson equation (\ref{HS_Eulerian1}a) to a coupled system of
first-order equations.  Introducing the velocity vector
 $u = -\nabla p$, equation (\ref{HS_Eulerian1}) can be rewritten as
\begin{subequations}\label{HS_Eulerian2}
\begin{alignat}{2}
u + \nabla p &= 0 &&\text{in} \quad \Omega(t) \,,\\
\div u &=\mu \delta &&\text{in} \quad \Omega(t)\,,\\
p &= \rH \qquad&&\text{on} \quad \Gamma(t)\,,\\
\V(\Gamma(t)) &= u\cdot n \qquad&&\text{on}\quad\Gamma(t)\,,\\
\Omega(0) &= \Omega\,.
\end{alignat}
\end{subequations}

\subsection{Some prior results}
In the case that fluid is  {\it not} being injected into the Hele-Shaw cell and $\mu=0$,
Constantin \& Pugh \cite{CoPu1993} established the stability and exponential decay of solutions of (\ref{HS_Eulerian1}) using the methods of complex analysis.
Friedman and Reitich \cite{FrRe2001} also establish this stability result.
In \cite{Chen1993}, Chen studied a two-phase Hele-Shaw problem with surface tension, and established well-posedness using the energy method coupled with certain pointwise estimates from the theory of harmonic functions; moreover, he proved that solutions exist for all time if the initial interface is a sufficiently small perturbation of equilibrium.   Weak solutions have been obtained by Elliott \& Ockendon \cite{ElOc1982} and Gustafsson \cite{Gu1985}, and classical short-time solutions to related problems have been obtained by Escher and Simonett \cite{EsSi1997a} in multiple space dimensions.  In two dimensions,  Escher and Simonett \cite{EsSi1998}
establish global existence and stability near spherical shapes using center manifold theory.

In the case of fluid injection where $\mu >0$,  Prokert \cite{Pr1998} and Vondenhoff \cite{Vo2011} establish global existence results in the case that
the injection rate $\mu$ is a positive constant.

\subsection{Statement of main results}
We propose a simple methodology (equally applicable in three space dimensions) for establishing global existence and {\it decay} to equilibrium for
time-dependent injection rates. Unlike the case of zero fluid injection, the decay to equilibrium is not exponentially fast but, rather, algebraic.
 We use an Arbitrary Eulerian Lagrangian (ALE) formulation to transform the free-boundary problem (\ref{HS_Eulerian2}) to a system of PDE on a fixed domain. This ALE transformation depends on the signed height function $\tth$ measuring the signed distance between the moving surface $\Gamma(t)$ and the expanding sphere $B_{\rho(t)}$. The idea behind the proof is the construction of a (total) norm, denoted by $\triplenorm{\hspace{1.5pt}\cdot\hspace{1.5pt}}_T$, which consists of a norm $\|\cdot\|_X$ associated to
energy estimates in sufficiently high-regularity Sobolev spaces, and a norm $\|\cdot\|_Y$ associated to decay estimates in weaker topologies. The total norm is given by
\begin{align*}
\triplenorm{\hspace{1.5pt}\cdot\hspace{1.5pt}}_T = \sup_{t\in[0,T]}\left( \|\cdot\|_X + \D(t) \|\cdot\|_Y\right)
\end{align*}
for some function $\D(t) \to \infty $ as $t \to\infty $.   We shall prove that $\tth$ satisfies
\begin{align}
\triplenorm{\tth}_T \le C \epsilon + \triplenorm{\tth}_T^{2p} \label{total_norm_ineq}
\end{align}
for some integer $p$. The inequality (\ref{total_norm_ineq}) implies that if $\epsilon$ ( an upper bound of the initial for a norm of the initial data $\tth$) is sufficiently small, then $\triplenorm{\tth}_T$ stays small for all $t\in [0,T]$;  a standard continuation argument shows provides global existence and the fact $ \mathcal{D} (t)$ in front
of the lower-order norm $ \|\cdot\|_Y$ gives the decay to equilibrium.

The main results we establish in this paper are the following two theorems.
\begin{theorem}[Stability for slow injection]\label{thm:main_thm1}
Let $(p,\Omega(t))$ be the solution to {\rm(\ref{HS_Eulerian1})}, $\tth$ denote the signed distance between $\Gamma(t)=\partial \Omega(t)$ and $\bdy B_\rho$ with 
$\rho(t)$ defined by {\rm(\ref{defn:rho})}, and the center of mass of the initial domain $\Omega$ be the origin. If the injection rate $\mu\ge 0$ is such that the corresponding 
radius $\rho(t)$ satisfies
\begin{align}
\sup_{t>0} \frac{\rho^{(k)}(t)(1+t)^k}{\rho(t)} < \infty \ k=1,2, \ \text{ and }\ \sup_{t>0} \frac{\rho(t)}{(1+t)^\alpha} < \infty \text{ for some }\alpha \le \frac{1}{3}\,,
\label{rho_assumption1}
\end{align}
then there exists an $\epsilon > 0$ sufficiently small,  such that the solution to {\rm(\ref{HS_Eulerian1})} exists for all time provided that $\|\tth_0\|_{H^6(\bbS^1)} \le \epsilon$. Moreover, the signed distance $\tth$ decays to zero, and
\begin{align}
\|\tth(t)\|_{H^{2.5}(\bbS^1)} \le C \rho(t)^{-2} e^{- \beta d(t)} \qquad\Forall t>0 \label{h_decay1}
\end{align}
for some constant $C>0$ and $\beta \in (0,7/8)$, where $\displaystyle{} d(t) = \int_0^t \frac{6ds}{\rho(s)^3}$.
\end{theorem}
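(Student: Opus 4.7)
The plan is to reduce the free-boundary problem {\rm(\ref{HS_Eulerian2})} to a single nonlinear evolution equation for the signed height $\tth$ on $\bbS^1$, identify a dissipative linearization whose spectrum gives the decay, and close everything through the nonlinear inequality {\rm(\ref{total_norm_ineq})} announced in the introduction. First I would parameterize $\Gamma(t)$ as the radial graph $r = \rho(t) + \tth(\theta,t)$ in polar coordinates and use this as the defining ALE map from $\bbS^1$. Splitting the pressure as $p = p_0 + \tilde p$, where $p_0$ is the explicit radial background solution of {\rm(\ref{HS_Eulerian1})} on $B_{\rho(t)}$, the perturbation $\tilde p$ is harmonic in $\Omega(t)$ with Dirichlet trace determined, through the graph, by $\rH - p_0|_\Gamma$. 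Pulling the kinematic condition $\V(\Gamma) = -\partial_n p$ back to $\bbS^1$ and expanding in $\tth$ produces an evolution of the schematic form
\begin{equation*}
\partial_t \tth \;=\; \frac{1}{\rho(t)^3}\,\L\tth \;+\; \N(\tth,\rho,\dot\rho),
\end{equation*}
where $\L$ is the pseudodifferential operator with Fourier symbol $|k|(1-k^2)$, arising as the composition of the unit-disk Dirichlet-to-Neumann map with the linearization of $\rH$ around the round circle, and $\N$ collects the quadratic and higher terms as well as the $\dot\rho$-transport contributions. Volume conservation and the center-of-mass hypothesis kill the neutral modes $k = 0, \pm 1$ of $\L$; for $|k|\ge 2$ the eigenvalue of $\rho^{-3}\L$ is bounded above by $-6/\rho(t)^3$, which identifies $d(t) = \int_0^t 6\,\rho(s)^{-3}\,ds$ as the natural decay rate and motivates the weight $\D(t) = \rho(t)^2 e^{\beta d(t)}$.

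With the $X$-norm set to $\|\cdot\|_{H^6(\bbS^1)}$ and the $Y$-norm to $\|\cdot\|_{H^{2.5}(\bbS^1)}$, I would run two coupled energy estimates. The $X$-estimate is the standard one: apply $\partial_\theta^6$, pair against $\partial_\theta^6\tth$, use the coercivity of $-\L$ at top order, and absorb $\N$ by Kato--Ponce-type commutator bounds to obtain
\begin{equation*}
\sup_{[0,T]}\|\tth(t)\|_X \;\le\; C\epsilon + C\triplenorm{\tth}_T^{2p}.
\end{equation*}
The $Y$-estimate is the heart of the matter: after weighting by $\D(t)^2$ one arrives at a differential inequality of the form
\begin{equation*}
\tfrac{d}{dt}\bigl(\D^2 \|\tth\|_Y^2\bigr) \;\le\; \Bigl(\tfrac{4\dot\rho}{\rho} + \tfrac{12\beta - 12}{\rho^3}\Bigr)\D^2 \|\tth\|_Y^2 \;+\; \D^2\,\bigl(\text{nonlinear remainder}\bigr),
\end{equation*}
whose linear coefficient is non-positive precisely when $\beta$ stays strictly below a critical value determined by $\dot\rho\,\rho^2$. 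The hypothesis {\rm(\ref{rho_assumption1})} delivers $\dot\rho\,\rho^2 \lesssim \rho^3/(1+t) \lesssim 1$, which is the mechanism by which the exponent $\alpha \le 1/3$ enters and which, once the lower-order prefactors in $\dot\rho$ and $\ddot\rho$ are tracked honestly, also accounts for the restriction $\beta < 7/8$.

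Combining the two bounds yields exactly {\rm(\ref{total_norm_ineq})}, and a standard continuation argument closes the global-in-time estimate once $\epsilon$ is sufficiently small; the pointwise decay {\rm(\ref{h_decay1})} is then read off directly from the definition of $\triplenorm{\hspace{1.5pt}\cdot\hspace{1.5pt}}_T$. I expect the main obstacle to be the weighted $Y$-estimate rather than the higher-order $X$-estimate, because every lower-order term produced by the time dependence of the ALE diffeomorphism, by the linearization error of the curvature, and by the $\dot\rho$-driven transport carries a prefactor in $\rho$, $\dot\rho$, or $\ddot\rho$ that must be absorbed by the dissipation $6/\rho^3$ after multiplication by $\D(t)^2$. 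Tracking these prefactors simultaneously is precisely what couples all three growth conditions in {\rm(\ref{rho_assumption1})} and is the ultimate source of the gap between $\beta$ and $1$ in the statement.
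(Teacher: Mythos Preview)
Your overall architecture---reduce to an evolution for $\tth$, identify the linearized symbol $|k|(1-k^2)/\rho^3$ plus a transport piece, separate the neutral modes $k=0,\pm1$, and close via the polynomial inequality (\ref{total_norm_ineq})---matches the paper. But the execution you sketch differs from the paper in two places, and your explanation of the constant $7/8$ is misplaced.

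\textbf{Low-norm decay.} You propose a weighted differential inequality for $\D(t)^2\|\tth\|_Y^2$. The paper instead solves each Fourier mode explicitly by the integrating factor $I_k(t)=\int_0^t\big[|k|(|k|^2-1)\rho^{-3}+|k|\rho'/\rho\big]$ and bounds $\hat\tth_k$ via Duhamel. The authors remark explicitly that this gives sharper decay than energy methods. More importantly, in your scheme the $4\dot\rho/\rho$ in the linear coefficient should cancel against the transport contribution $-|k|\rho'/\rho$ (this is exactly the $\rho^{|k|}$ factor in $e^{-I_k}$, and is why $\D$ carries a $\rho^2$), leaving $12(\beta-1)/\rho^3$, which is negative for all $\beta<1$. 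So the linear part is \emph{not} where $7/8$ comes from. The threshold $\beta<7/8$ arises when you time-integrate the nonlinear remainder: interpolating $\|\tth\|_{H^{5.5}}^2\le C\|\tth\|_{H^{2.5}}^{2/7}\|\tth\|_{H^6}^{12/7}$ produces a factor $\D^{-16/7}$ inside $\int_0^t \rho^4 e^{2d}\|R\|_{H^{2.5}}^2$, and the resulting $\int e^{(2-16\beta/7)d}d'\,ds$ is finite only when $2-16\beta/7>0$. The growth condition $\alpha\le 1/3$ is used separately to control the residual factor $t e^{-2\beta d(t)/7}$.

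\textbf{High-norm energy.} You pair $\partial_\theta^6$ of the surface equation against $\partial_\theta^6\tth$. The paper instead works in the bulk $B_1$: it tangentially differentiates the velocity equation $\ttv+\rA^\rT\nabla\ttq=0$, tests against $\rJ\bar\partial^\ell\ttv$, and recovers $\frac{d}{dt}\|\bar\partial^{\ell+1}\tth\|_{L^2}^2$ from the boundary integral via the curvature condition. A Hodge-type lemma then converts tangential control of $\ttv$ into full $H^\ell$ control. Your surface approach may also close, but you would need to show that the Dirichlet-to-Neumann map on the perturbed domain commutes with $\partial_\theta^6$ up to controllable errors, which is essentially the same work repackaged.

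\textbf{Neutral modes.} They are not ``killed''. Volume and center-of-mass conservation give algebraic identities expressing $\hat\tth_0$ and $\hat\tth_{\pm1}$ as quadratic (and higher) combinations of all modes; one then shows these are bounded by $C\D(t)^{-1}\triplenorm{\tth}_T^2$ using the already-established decay of the $|k|\ge2$ modes.
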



\begin{theorem}[Stability for fast injection]\label{thm:main_thm2}
Let $(p,\Omega(t))$ be the solution to {\rm(\ref{HS_Eulerian1})}, $\tth$ denote the signed distance between $\Gamma(t)=\partial \Omega(t)$ and $\bdy B_\rho$ with
$\rho(t)$ defined by {\rm(\ref{defn:rho})}, and $(x_0,y_0)$ be the center of mass of the initial domain $\Omega$. Suppose that $\mu\ge 0$ is such that the corresponding $\rho(t)$ satisfies
\begin{align}
\sup_{t>0} \frac{\rho^\pprime(t)(1+t)}{\rho(t)} < \infty \quad\text{and}\quad \nu \equiv \sup \Big\{ \alpha \hspace{1pt}\Big|\hspace{1pt} \sup_{t>0} \frac{(1+t)^\alpha}{\rho(t)} < \infty\Big\} > \frac{3}{8} \label{rho_assumption2}
\end{align}
as well as one of the following conditions:
\begin{enumerate}
\item $\rho^{\pprime\prime} \le 0$ or  \quad {\rm(2)} $\log \rho^\pprime$ has small enough total variation.
\end{enumerate}
Then there exists an $\epsilon > 0$ sufficiently small,  such that the solution to {\rm(\ref{HS_Eulerian1})} exists for all time provided that $\|\tth_0\|_{H^\rK(\bbS^1)} \le \epsilon$ with $\rK$ defined by $\displaystyle{}\rK = \max\Big\{6, \Big[\frac{64\nu-21}{16\nu -6}\Big]+1\Big\}$. Moreover, the signed distance $\tth$ decays to zero, and
\begin{align}
\|\tth(t)\|_{H^{2.5}(\bbS^1)} \le C \frac{\sqrt{1+t}}{\rho(t)^2} \qquad \Forall t > 0 \label{h_decay2}
\end{align}
for some constant $C>0$.
\end{theorem}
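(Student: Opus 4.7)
The plan is to pull the free-boundary formulation (\ref{HS_Eulerian2}) back to the fixed unit disk via an ALE map adapted to $\tth$, set up the time-weighted total norm described in the introduction, and close the nonlinear inequality (\ref{total_norm_ineq}). Parametrizing the free boundary by $\Gamma(t)=\{(\rho(t)+\tth(\theta,t))(\cos\theta,\sin\theta):\theta\in\bbS^1\}$ and expanding the transformed system to first order around the expanding disk $B_{\rho(t)}$, a Fourier expansion in $\theta$ produces, for each mode,
\[
\p_t\ft{\tth}(k,t)+\frac{|k|(k^2-1)}{\rho(t)^3}\ft{\tth}(k,t)=\ft{\F}(k,t),
\]
where the remainder $\F$ collects the terms of order two and higher in $\tth$ along with commutator contributions carrying factors of $\rho^\pprime/\rho$ from the time-dependent geometry. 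The neutral modes $k=0,\pm 1$ are controlled through the exact volume identity (\ref{defn:rho}) and through the smallness of the initial center of mass $(x_0,y_0)$; both constraints are quadratic in $\tth$ and so are absorbed by the nonlinear bookkeeping.

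For the high-regularity estimate I would take $\|\cdot\|_X$ to be a suitable $H^\rK(\bbS^1)$-type norm, and for the decay estimate set $\|\cdot\|_Y=\|\cdot\|_{H^{2.5}(\bbS^1)}$ with weight $\D(t)=\rho(t)^2/\sqrt{1+t}$, as dictated by the target (\ref{h_decay2}). Differentiating the ALE-transformed system $\rK$ times and testing against the natural dual variable yields a differential inequality of the schematic form
\[
\frac{d}{dt}\|\tth\|_X^2+\frac{c}{\rho^3}\|\tth\|_{X^+}^2\lesssim\frac{|\rho^\pprime|}{\rho}\|\tth\|_X^2+\Nhh\triplenorm{\tth}_T^{2p},
\]
where $\|\cdot\|_{X^+}$ controls $3/2$ more derivatives than $\|\cdot\|_X$. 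The first bound in (\ref{rho_assumption2}) only makes $\rho^\pprime/\rho$ an $L^\infty_t$ multiple of $(1+t)^{-1}$, which is not integrable on $[0,\infty)$ and would lead via Gronwall to polynomial-in-$t$ growth. The role of the auxiliary hypotheses (1) $\rho^{\pprime\prime}\le 0$ or (2) small total variation of $\log\rho^\pprime$ is precisely to supply the monotone or near-monotone structure needed to absorb the time-dependent commutators and propagate $\|\tth(t)\|_X$ uniformly in $t$.

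The decay in $\|\cdot\|_Y$ follows from Duhamel applied to the mode-by-mode equation above, with integrating factor $\exp\bigl(-c\int_s^t\rho(\tau)^{-3}d\tau\bigr)$ on modes $|k|\ge 2$. When $\rho$ grows faster than $(1+t)^{1/3}$ this factor does not tend to zero, so the decay rate in (\ref{h_decay2}) cannot be read off the semigroup alone; the $\rho^{-2}$ factor emerges instead from the scaling of the Hele-Shaw operator around the expanding equilibrium (exactly as in (\ref{h_decay1}) for Theorem \ref{thm:main_thm1}), while the residual $\sqrt{1+t}$ is recovered inside the Duhamel integral by interpolating between the uniformly bounded high norm $\|\tth\|_X$ and the weak contraction of the semigroup. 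The regularity threshold $\rK=\max\{6,[(64\nu-21)/(16\nu-6)]+1\}$ and the sharp condition $\nu>3/8$ are then forced on us: after interpolating nonlinear contributions of the form $\|\tth\|_X^\sigma\|\tth\|_Y^{1-\sigma}$, the corresponding Duhamel integrals must remain bounded by the chosen weight $\D(t)$, and solving the resulting exponent inequality produces precisely these thresholds.

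Combining the two steps yields (\ref{total_norm_ineq}) for some integer $p$, and the standard continuation argument then delivers global existence together with the decay (\ref{h_decay2}). The main obstacle, in contrast to Theorem \ref{thm:main_thm1}, lies in this decay step: the quantity $\int_0^t\rho^{-3}ds$ need not diverge when $\rho$ grows fast, so the linearized semigroup provides essentially no decay on its own. Every factor of $\rho^{-1}$ or $\sqrt{1+t}$ in the target rate must therefore be squeezed from the combination of the volume identity, the smallness of the center of mass, and the structural hypothesis on $\rho^\pprime$; balancing these gains against the nonlinear losses is what fixes the sharp thresholds $\nu>3/8$ and the explicit formula for $\rK$.
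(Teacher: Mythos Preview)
Your overall architecture matches the paper's: ALE reduction, Fourier/Duhamel for the low-norm decay, tangential energy estimates for the high norm, and a polynomial inequality in the total norm to close. Two places where your sketch departs from what actually makes the argument work deserve correction.

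First, the linearized mode equation the paper derives is
\[
\frac{d\ft{\tth}_k}{dt}+\Bigl[\frac{|k|(|k|^2-1)}{\rho^3}+|k|\,\frac{\rho'}{\rho}\Bigr]\ft{\tth}_k=\ft{R}_k,
\]
and the term $|k|\rho'/\rho$ is \emph{not} a commutator to be dumped into the forcing: it is precisely the source of the $\rho^{-2}$ in (\ref{h_decay2}). The integrating factor picks up $|k|\log\rho(t)$, so $|\ft{\tth}_k(t)|\lesssim\rho(t)^{-|k|}[\,\cdots]$, and for $|k|\ge 2$ this yields the $\rho^{-2}$. If you leave that term on the right as a linear forcing, Duhamel cannot close---you would be bounding $\ft{\tth}_k$ by a time integral of itself with no small parameter. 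The residual $\sqrt{1+t}$ then comes from a Cauchy--Schwarz in time on the Duhamel integral (the factor $t$ in the paper's (\ref{decay_estimate_insu})), after which one checks that $\int_0^\infty\rho^4\|R\|_{H^{2.5}}^2\,ds<\infty$; the interpolation fixing $\rK$ and the threshold $\nu>3/8$ enter exactly here.

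Second, in the high-norm estimate the bad term is not $(\rho'/\rho)\|\tth\|_X^2$. The tangential energy identity produces a cross term $\int_{\bbS^1}\rho\rho'(\rho+\tth)\bp^\ell(\ttv\cdot\N)\bp^\ell\tth\,dS$, which after the substitution $\widetilde{\tth}=\tth+\tth^2/(2\rho)$ becomes $\frac{1}{2}\frac{d}{dt}\bigl[\rho^2\rho'\|\bp^\ell\widetilde{\tth}\|^2\bigr]-\frac{1}{2}\rho^2\rho''\|\bp^\ell\widetilde{\tth}\|^2$. The first piece is a new positive energy, and this is why the paper's total norm carries the extra summand $\rho\sqrt{\rho'}\,\|\tth\|_{H^{\rK-1}}$ that your $\|\cdot\|_X$ omits; the second piece is what the auxiliary hypothesis handles---either $\rho''\le 0$ gives it a sign, or small total variation of $\log\rho'$ makes $\int_0^\infty|\rho''/\rho'|\,ds$ small enough to absorb into $\triplenorm{\tth}_T^2$. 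Your schematic with $\rho'/\rho$ on the right would indeed blow up under Gronwall, but that term never appears in that form.
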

In the following discussion, we define
\begin{equation}\label{defn:D}
\D(t) = \left\{\begin{array}{cl}
\displaystyle{} \rho(t)^2 e^{\beta d(t)} & \text{if (\ref{rho_assumption1}) is satisfied}, \vspace{.2cm}\\
\displaystyle{} \frac{\rho(t)^2}{\sqrt{1+t}} & \text{if (\ref{rho_assumption2}) is satisfied}.
\end{array}
\right.
\end{equation}
Then (\ref{h_decay1}) and (\ref{h_decay2}) can be summarized as
\begin{align}
\D(t) \|\tth(t)\|_{H^{2.5}(\bbS^1)} \le C \qquad\Forall t>0
\end{align}
for some constant $C>0$.

\begin{remark}
In Theorem {\rm\ref{thm:main_thm2}}, the sufficient condition $\|\tth_0\|_{H^k(\bbS^1)} \le \epsilon$ which guarantees the existence of the global-in-time solution also suggests that the center of mass cannot be too far from the point of injection.
\end{remark}

\begin{remark}
Assumption {\rm(\ref{rho_assumption1})} limits the injection rate of the fluid. In this case, the location of the center of mass of the initial domain is crucial for the stability result, and Theorem \ref{thm:main_thm1} states that stability holds  when the center of mass of the initial domain and the point of injection are identical.

On the other hand, when {\rm(\ref{rho_assumption2})} is valid, the injection is fast enough so that even if the center of mass of the initial domain is different from (but close to) the point of injection, the stability result holds.
\end{remark}


\begin{remark}
Suppose that {\rm(\ref{rho_assumption2})} is satisfied. By Gronwall's inequality, the condition
\begin{align}
\rho'(t)\le C \rho(t) (1+t)^{-1} \quad\text{for some constant $C>0$}, \nonumber
\end{align}
 implies that $\nu < \infty$. Then for each $0<\varepsilon \ll 1$, there exists $C_1$ and $C_2$ such that
\begin{align*}
\frac{(1+t)^{\nu-\varepsilon}}{\rho(t)} \le \frac{1}{C_1} \quad \text{and}\quad \frac{(1+t)^{\nu+\varepsilon}}{\rho(t)} \ge \frac{1}{C_2} \qquad\Forall t>0
\end{align*}
so that
\begin{align}
C_1 (1+t)^{\nu-\varepsilon} \le \rho(t) \le C_2 (1+t)^{\nu+\varepsilon} \qquad\Forall t>0. \label{rho_grows_algebraically}
\end{align}
Consequently, $\rho(t)^{2-\frac{1}{2\nu^-}} \le \D(t)$, where we use the notation $\nu^\pm$ to denote $\nu\pm\varepsilon$ whenever $0<\varepsilon \ll 1$.
We will make crucial use of the integrability of certain functions of these bounds. 
\end{remark}
\vspace{.1in}

When $\mu = 0$, $\rho(t) = 1$ so (\ref{rho_assumption1}) holds. Theorem \ref{thm:main_thm1} implies that the distance from the moving boundary $\Gamma(t)$ to the boundary of the equilibrium state (wherein the boundary is the unit circle) decays to zero exponentially, which is result that was obtained by  \cite{Chen1993},\cite{CoPu1993}, \cite{FrRe2001}, and \cite{EsSi1998}, but with more conditions on the data.

When $\mu$ is a positive constant, we have $\rho(t) = \sqrt{1+\frac{\mu}{\pi}t}$.   For this case, Vondenhoff  \cite{Vo2011} provides a decay estimate
for a rescaled function:
$$
\big\|\frac{\tth}{\rho}\big\|_{H^6(\bbS^1)} \le C \rho^{-\alpha} \qquad\Forall \alpha\in (0,1)\text{ and } t>0 \,,
$$
but this, in fact, shows that $\big\|\tth\big\|_{H^6(\bbS^1)} \sim t^ {(1- \alpha )/2}$ for large $t>0$ and hence the perturbation may actually grow in time.

In contrast, for the case of constant injection, our Theorem \ref{thm:main_thm2} shows that
$$
\|\tth (t)\|_{H^{2.5}(\bbS^1)} \le C \rho(t)^{-1} \qquad\Forall t>0
$$
which is the first result proving the decay rate of the actual height function of the perturbation of the sphere.   With the rescaled variable,
$$
\big\|\frac{\tth}{\rho}\big\|_{H^6(\bbS^1)} \le C \rho(t)^{-2} \qquad\Forall t>0\,.
$$
Moreover, our theorem applies to  time-dependent injection rates, and thus generalizes the results of \cite{Pr1998} and \cite{Vo2011}.

\subsection{Outline}
In Section \ref{sec:ALE_formulation}, we derive the evolution equation for the signed height function $\tth$ and define the corresponding ALE map $\psi$, which we use to  pull-back the equations onto the fixed domain $B_1$.
In Section \ref{sec:preliminary}, we derive some inequalities, fundamental for our subsequent analysis.

In Section \ref{sec:low_norm_est}, an estimate of the type
\begin{align}
\D(t) \|\tth(t)\|_{H^{2.5}(\bbS^1)} \le C \Big[\|\tth_0\|^2_{H^{2.5}(\bbS^1)} + \triplenorm{\tth}^2_T + \triplenorm{\tth}^8_T\Big] \qquad \Forall t\in [0,T] \label{low_norm_ineq}
\end{align}
is derived using the decay property of the linearized problem, where $[0,T]$ is the time interval of the existence of the solution. We combine this with energy estimates for $\tth$ in a higher-order Sobolev spaces in Section \ref{sec:high_norm_est}, from which
we obtain
\begin{align}
&\Big[\int_0^T \frac{1}{\rho(t)^3} \|\tth(t)\|^2_{H^{\rK+1.5}(\bbS^1)} dt\Big]^\frac{1}{2}
\hspace{-2pt} +\sup_{t\in [0,T]} \Big[\|\tth(t)\|_{H^\rK(\bbS^1)} \hspace{-2pt}+ \rho(t) \sqrt{\rho^\pprime(t)} \|\tth(t)\|_{H^{\rK-1}(\bbS^1)} \Big] \nonumber \\
& \qquad\qquad\quad \le C_\delta M(\|\tth_0\|_{H^6(\bbS^1)}) + C_\delta \triplenorm{\tth}^2_T \P\big(\triplenorm{\tth}_T^2\big) + \delta \triplenorm{\tth}_T \label{high_norm_ineq}
\end{align}
for some polynomial $\P$. Finally, in Section \ref{sec:stability_result}, we show how the
decay estimate (\ref{low_norm_ineq}) together with our energy estimate (\ref{high_norm_ineq}) leads to Theorem \ref{thm:main_thm1} and \ref{thm:main_thm2}.

\section{The Arbitrary Eulerian-Lagrangian (ALE) formulation}\label{sec:ALE_formulation}
We let $\bbS^1 = \bdy B_1$ denote the boundary of the unit ball, and parametrize $\bbS^1$ using the usual angular variable $\theta$. For each $\theta\in\bbS^1$, let $\tth (\theta,t)$ denote the signed distance from $\Gamma(t)$ to $\bdy B(0,\rho(t))$, where the sign of $\tth $ is taken positive if for  ${\bf x}(\theta,t) \in \Gamma(t)$,
$|{\bf x}(\theta,t)| > \rho(t)$ and taken negative if $|{\bf x}(\theta,t)| < \rho(t)$.
 In other words, $\Gamma(t)$ can be parametrized by the equation
\begin{align}
{\bf x}(\theta,t) = (x(\theta,t),y(\theta,t)) = \big(\rho(t) + \tth (\theta,t)\big)\rN(\theta) \qquad\Forall \theta\in \bbS^1\,, \label{parametrizatioin_of_boundary}
\end{align}
where $\rN(\theta) = (\cos\theta, \sin\theta)$ is the outward-pointing unit normal to $B_1$.

Let $\rT(\theta) = (-\sin\theta,\cos\theta)$ be the tangent vector on $\bbS^1$. Then the outward-pointing unit normal at the point $(x(\theta,t),y(\theta,t))$ is
\begin{align}
n(\theta,t) = {\bf J}_{\tth}^{-1}(\theta,t) \Big[\big(\rho(t)+\tth (\theta,t)\big) \rN(\theta) - \tth_\theta(\theta,t) \rT(\theta)\Big],
\label{defn:n}
\end{align}
where $\ttJ_\tth(\theta,t) = \sqrt{(\rho(t)+\tth (\theta,t))^2 + \tth_\theta(\theta,t)^2}$.

Let $\Phi:\bbR^2 \times (0,T) \to \bbR$ denote the level-set function defined by
\begin{align*}
\Phi({\bf x},t) = \frac{1}{2} \Big[|{\bf x}|^2 - \Big(\rho(t) + \tth\big(\tan^{-1} \frac{y}{x},t\big)\Big)^2\Big].
\end{align*}
Then $\Phi$ has the property that $\Gamma(t) \equiv \big\{(x,y)\in\bbR^2\hspace{1pt}\big|\hspace{1pt} \Phi(x,y,t) = 0\big\}$. Since the boundary is moving with the fluid velocity,
$\Phi$ satisfies the transport equation
\begin{align*}
\Phi_t ({\bf x}(\theta,t), t) + u({\bf x}(\theta,t),t) \cdot (\nabla \Phi)({\bf x}(\theta,t),t) = 0 \quad \text{on}\quad \bbS^1\times (0,T)
\end{align*}
which implies that $\tth$ satisfies
\begin{align*}
- (\rho + \tth)(\rho^\pprime + \tth_t) + u({\bf x}) \cdot \Big[(\rho + \tth)\rN - \tth_\theta \rT \Big] = 0 \quad \text{on}\quad \bbS^1\times (0,T)
\end{align*}
or after rearrangement,
\begin{align}
\tth_t(\theta,t) + \rho^\pprime(t) \hspace{-1pt} = \ttv(\theta,t) \cdot \N(\theta,t) \quad \text{on}\quad \bbS^1\times (0,T), \label{heq}
\end{align}
where $\ttv(\theta,t) = u({\bf x}(\theta,t),t)$, and 
\begin{align}
\N(\theta,t) \equiv \rN(\theta) - \frac{\tth_\theta(\theta,t)}{\rho(t)+\tth(\theta,t)}\rT(\theta). \label{defn:N}
\end{align}

In addition, we note that with respect to the height function $\tth$, the mean curvature is given by
\begin{align}
\rH_\tth({\bf x}) &= \frac{- (\rho+\tth )\tth_{\theta\theta} + (\rho+\tth )^2 + 2 \tth_\theta^2}{\big[(\rho+\tth )^2 + \tth_\theta^2\big]^{3/2}} \nonumber\\
&= \frac{- (\rho + \tth) \tth_{\theta\theta} + \ttJ_\tth^2 + \tth^2_\theta}{\ttJ_\tth^3} \qquad\text{on}\quad \bbS^1\times (0,T). \label{H1}
\end{align}

\subsection{A divergence-free velocity}
When $\tth_0 = 0$ so that $\Omega = B_1$, the solution to the Hele-Shaw equation (\ref{HS_Eulerian2}) is given by $\Omega(t) = B(0,\rho(t))$,
\begin{align*}
 \bar{p}(x,t) = \frac{1}{\rho(t)} - \frac{\mu(t)}{2\pi} \log \frac{|x|}{\rho(t)}\,, \ \
 \bar{u}(x,t) = \frac{\mu(t)}{2\pi} \frac{x}{|x|^2} \,.
\end{align*}
In order to have a divergence-free velocity field, we introduce the new variables
${\bf u} = u - \bar{u}$ and ${\bf p} = p - \bar{p}$, so that (\ref{HS_Eulerian2}) is converted to
\begin{subequations}\label{HS_Eulerian3}
\begin{alignat}{2}
{\bf u} + \nabla {\bf p} &= 0 \qquad&&\text{in}\quad \Omega(t),\\
\div {\bf u} &= 0 &&\text{in}\quad\Omega(t),\\
{\bf p} &= \rH - \bar{p} \qquad&&\text{on}\quad \Gamma(t).
\end{alignat}
\end{subequations}

\subsection{The ALE formulation}
Let $\psi( \cdot ,t)$ denote the ALE  mapping, taking $B_1 $ to $\Omega(t)$, defined as the solution to the elliptic equation
\begin{subequations}\label{psi_eq1}
\begin{alignat}{2}
\Delta \psi &= 0 \qquad&&\text{in}\quad B_1,\\
\psi &= {\bf x} \qquad&&\text{on}\quad\bbS^1,
\end{alignat}
\end{subequations}
where we recall that ${\bf x} = (\rho + \tth)\rN$ on $\bbS^1$.  When the perturbation  $\tth$ is close to zero,  elliptic estimates and the inverse function theorem show that
$\psi(t)=\psi( \cdot ,t)$ is a diffeomorphism, and $\Omega(t) = \psi(t)(B_1)$. By introducing the ALE variables $\ttv = {\bf u}\circ\psi$, $\ttq = {\bf p}\circ\psi$, $\bar\ttq = \bar{p}\circ\psi$, and $\rA = \nabla \psi^{-1}$, we find that (\ref{HS_Eulerian3}) can be rewritten on the fixed domain as
\begin{subequations}\label{HS_ALE1}
\begin{alignat}{2}
\ttv^i + \rA^j_k \ttq ,_j &=0 \qquad&&\text{in}\quad B_1,\\
\rA^j_i \ttv^i,_j &=0 &&\text{in}\quad B_1,\\
\ttq &= \rH_\tth({\bf x}) - \bar{\ttq} \qquad&&\text{on}\quad \bbS^1, \\
\tth_t 
+ \rho^\pprime &= \ttv \cdot \N(\theta,t) \qquad&&\text{on}\quad \bbS^1,
\end{alignat}
\end{subequations}
where $\rH_\tth({\bf x})$ is given by (\ref{H1}) and $\N$ is defined in (\ref{defn:N}).

\subsection{The vector $\rJ \rA^\rT \rN$}
Let $\rJ = \det(\nabla \psi)$. Since $\psi(\theta,t) = \big(\rho(t)+\tth(\theta,t)\big)\rN(\theta)$ on $\bbS^1$, we find that
\begin{align}
\rJ \rA^\rT \rN &= \left[\hspace{-1.5pt}\begin{array}{cc}
\psi^2,_2 &\hspace{-1.5pt} - \psi^2,_1 \\
-\psi^1,_2 &\hspace{-1.5pt} \psi^1,_1
\end{array}\hspace{-1.5pt}
\right]\left[\hspace{-1.5pt}\begin{array}{c}
\rN_1 \\
\rN_2
\end{array}
\hspace{-1.5pt}\right] = \left[\hspace{-1.5pt}\begin{array}{cc}
\psi^2,_1 &\hspace{-1.5pt} \psi^2,_2 \\
- \psi^1,_1 &\hspace{-1.5pt} - \psi^1,_2
\end{array}
\hspace{-1.5pt}\right]\left[\hspace{-1.5pt}\begin{array}{c}
\rT_1 \\
\rT_2
\end{array}
\hspace{-1.5pt}\right] = \Big[\frac{\p \psi^2}{\p \rT}, - \frac{\p \psi^1}{\p \rT}\Big]^\rT \nonumber\\
&= \Big[\frac{\p \psi^2}{\p \theta}, - \frac{\p \psi^1}{\p \theta}\Big]^\rT = \big[ \tth_\theta \sin\theta + (\rho+\tth) \cos\theta, - \tth_\theta \cos\theta + (\rho+\tth) \sin\theta \big]^\rT \nonumber\\
&= (\rho + \tth) \rN - \tth_\theta \rT = (\rho + \tth) \N(\theta). \label{JAtN}
\end{align}

\subsection{Linearization about the unperturbed state $\tth \equiv 0$}
When $\tth_0 = 0$, $\tth = 0$ for all $t>0$, in which case, $\psi(x,t) = \rho(t)x$ and $\rA = \rho^{-1}\id$. Therefore, we may decompose (\ref{HS_ALE1}a,b) into a linear term and
a nonlinear remainder as
\begin{subequations}\label{HS_ALE2}
\begin{alignat}{2}
\ttv + \rho^{-1} \nabla \ttq &= f_1 \qquad&&\text{in}\quad B_1\times (0,T),\\
\div \ttv &= f_2 &&\text{in}\quad B_1\times (0,T),
\end{alignat}
\end{subequations}
where
\begin{align*}
f_1^i = (\rho^{-1} \delta^j_i - \rA^j_i) \ttq,_j\quad\text{and}\quad f_2 = (\delta^j_i - \rho \rA^j_i) \ttv^i,_j.
\end{align*}

In order to determine the linear operator associated to the boundary condition (\ref{HS_ALE1}c), we multiply both sides of (\ref{HS_ALE1}c) by $\rho^{-2} (\rho+\tth)^{-1} \ttJ_\tth^3$, and find that
\begin{align}
& \ttq = \frac{1}{\rho^2} \Big[- \tth_{\theta\theta} + \rho + \tth + \frac{\tth_\theta^2}{\rho+\tth} \Big] - (\gamma + 1) \bar{\ttq} - \gamma \ttq, \label{linearize_bdy_condition_temp}
\end{align}
where
\begin{align*}
\gamma &= \rho^{-2} (\rho+\tth)^{-1} \ttJ_\tth^3 - 1 = \frac{\ttJ_\tth^6 - \rho^4(\rho+\tth)^2}{\rho^2 (\rho+\tth) \big[\ttJ_\tth^3 + \rho^2(\rho+\tth)\big]} \\
&= \frac{(\rho+\tth)^2 (4\rho^3 \tth + 6\rho^2 \tth^2 + 4 \rho \tth^3 + \tth^4) + 3(\rho+\tth)^4 \tth_\theta^2 + 3 (\rho+\tth)^2 \tth_\theta^4 + \tth_\theta^6}{\rho^2 (\rho+\tth) \big[\ttJ_\tth^3 + \rho^2 (\rho+\tth)\big]}.
\end{align*}
We remark that
\begin{align*}
\gamma = \frac{2\tth}{\rho} + \rg,
\end{align*}
for some $\rg = \O(\tth^2 + \tth_\theta^2)$. 
Since
\begin{align*}
\bar\ttq = \frac{1}{\rho} - \frac{\mu}{2\pi} \log\Big(\frac{\rho + \tth}{\rho}\Big) = \frac{1}{\rho} - \frac{\mu}{2\pi} \log\Big(1+\frac{\tth}{\rho}\Big),
\end{align*}
a Taylor expansion in $\tth/\rho \ll 1$ shows that $\displaystyle{}\bar\ttq \approx \frac{1}{\rho} - \frac{\mu \tth}{2\pi\rho} = \rho^{-1} -\rho^\pprime \tth$; thus by writing $\bar{\ttq} = \bar{\bar{\ttq}} + \rho^{-1} - \rho^\pprime \tth$, where
\begin{align*}
\bar{\bar\ttq} = -\frac{\mu}{2\pi} \log \Big(1+\frac{\tth}{\rho}\Big) + \frac{\mu \tth}{2\pi\rho} = - \frac{\mu}{2\pi} \Big[\log\Big(1+\frac{\tth}{\rho}\Big) - \frac{\tth}{\rho}\Big],
\end{align*}
we may write (\ref{linearize_bdy_condition_temp}) as
$$
\ttq = -\frac{1}{\rho^2} \big[\tth_{\theta\theta} + \tth\big] + \rho^\pprime \tth - \gamma \ttq + { \mathcal{E} } \qquad\text{on}\quad \bbS^1\times (0,T), \eqno{\rm(\ref{HS_ALE2}c)}
$$
where $ \mathcal{E} $ denotes an error-term given by
\begin{align*}
{ \mathcal{E} } = \frac{h_\theta^2}{\rho^2(\rho+\tth)} - (\gamma+1) \bar{\bar{\ttq}} + \gamma \rho^\pprime \tth - \frac{\rg}{\rho}.
\end{align*}

\subsection{A homogeneous version of (\ref{HS_ALE2})}
We can now define a new velocity field which is divergence-free.
Let $f_1 = \rw + \rho^{-1} \nabla \rr$, where $\rr$ is the zero-average solution  to the following elliptic equation
\begin{subequations}\label{req1}
\begin{alignat}{2}
\rho^{-1} \Delta \hspace{1pt}\rr &= \div f_1 - f_2 \qquad&&\text{in}\quad B_1\,,\\
\rho^{-1} \frac{\p \hspace{1pt}\rr}{\p \rN} &= f_1 \cdot \rN - \frac{1}{2\pi} \int_{B_1} f_2\, dx \qquad&&\text{on}\quad\bbS^1 \,.
\end{alignat}
\end{subequations}
We remark that the solvability of (\ref{req1}) is guaranteed by the solvability condition
\begin{align*}
\int_{B_1} (\div f_1 - f_2)\, dx = \int_0^{2\pi} \Big(f_1\cdot \rN - \frac{1}{2\pi} \int_{B_1} f_2\, dx\Big) d\theta.
\end{align*}
Let $v = \ttv - \rw$, and $q = \ttq - \rr$. By (\ref{req1}a) we find that $\div \rw_1 = f_2$; thus (\ref{HS_ALE2}) implies that
\begin{subequations}\label{HS_ALE3}
\begin{alignat}{2}
v + \rho^{-1} \nabla q &= 0 \qquad&&\text{in}\quad B_1\times(0,T)\,,\\
\div v &=0 &&\text{in}\quad B_1\times(0,T)\,,\\
q &= - \frac{\tth_{\theta\theta}}{\rho^2} - \frac{\tth}{\rho^2} + \rho^\pprime \tth - \gamma \ttq + \rG \qquad&&\text{on}\quad\bbS^1\times(0,T)\,,
\end{alignat}
\end{subequations}
where $\rG = { \mathcal{E} } - \rr$. We note that on $\bbS^1$,
\begin{align}
v \cdot \N &= (\ttv-\rw)\cdot \N = \tth_t + \rho^\pprime + \frac{\tth_\theta (\rw \cdot \rT)}{\rho + \tth} -(\rw \cdot \rN) \nonumber\\
& = \tth_t + \rho^\pprime + \frac{h_\theta (\rw \cdot \rT)}{\rho + \tth} - \frac{1}{2\pi} \int_{B_1} f_2\, dx. \label{tvdotn1}
\end{align}

\section{The total norm and the basic assumptions used for our decay estimates}\label{sec:assumption}
We first define the total norm $\triplenorm{\hspace{1pt}\cdot\hspace{1pt}}_T$, used to establish (\ref{total_norm_ineq}), as follows:
\begin{equation}\label{defn:total_norm}
\begin{array}{l}
\displaystyle{}
\triplenorm{\tth}_T \equiv \Big[\int_0^T \frac{1}{\rho(t)^3} \|\tth(t)\|^2_{H^{\rK+1.5}(\bbS^1)} dt\Big]^\frac{1}{2} \vspace{.2cm}\\
\displaystyle{} \quad\ \ +\hspace{-1pt}
\sup_{t\in [0,T]} \hspace{-2pt}\Big[\|\tth(t)\|_{H^\rK(\bbS^1)} \hspace{-2pt}+ \rho(t) \sqrt{\rho^\pprime(t)} \|\tth(t)\|_{H^{K-1}(\bbS^1)} \hspace{-2pt}+ \D(t) \|\tth(t)\|_{H^{2.5}(\bbS^1)} \Big],
\end{array}
\end{equation}
where $\rK=6$ if (\ref{rho_assumption1}) is satisfied or $\displaystyle{}\rK = \Big[ \frac{64\mu-21}{16\mu-6}\Big]+1$ if (\ref{rho_assumption2}) is satisfied, and we recall that $\D(t)$ is defined in (\ref{defn:D}) by
\begin{equation*}
\D(t) = \left\{\begin{array}{cl}
\displaystyle{} \rho(t)^2 e^{\beta d(t)} & \text{if (\ref{rho_assumption1}) is satisfied}, \vspace{.2cm}\\
\displaystyle{} \frac{\rho(t)^2}{\sqrt{1+t}} & \text{if (\ref{rho_assumption2}) is satisfied}.
\end{array}
\right.
\end{equation*}
We remark that once the boundedness of $\triplenorm{\tth}_T$ is established, $\|\tth\|_{H^{2.5}(\bbS^1)}$ decays to zero at the rate $\D(t)^{-1}$.
Throughout the rest of the paper, we assume that $\rho$ satisfies the condition (\ref{rho_assumption1}) or (\ref{rho_assumption2}); that is,
\begin{equation*}
\sup_{t>0} \frac{\rho^{(k)}(t)(1+t)^k}{\rho(t)} < \infty \ k=1,2, \ \text{ and }\ \sup_{t>0} \frac{\rho(t)}{(1+t)^\alpha} < \infty \text{ for some }\alpha \le \frac{1}{3} \eqno{(\ref{rho_assumption1})}
\end{equation*}
or
\begin{equation*}
\sup_{t>0} \frac{\rho^\pprime(t)(1+t)}{\rho(t)} < \infty \quad\text{and}\quad \nu \equiv \sup \Big\{ \alpha \hspace{1pt}\Big|\hspace{1pt} \sup_{t>0} \frac{(1+t)^\alpha}{\rho(t)} < \infty\Big\} > \frac{3}{8}\,. \eqno{(\ref{rho_assumption2})}
\end{equation*}
Moreover, we make the following  basic assumption: for $t \in [0,T]$ and for sufficiently small positive constants $ \epsilon $ and $ \sigma $ to be made precise later,
\begin{align}
\|\tth(t)\|_{H^\rK(\bbS^1)} + \D(t) \|\tth(t)\|_{H^{2.5}(\bbS^1)} \le \sigma \ll 1\quad\text{and}\quad \|\tth_0\|_{H^\rK(\bbS^1)} \le  \epsilon \ll 1. \label{assumption1}
\end{align}
(We will prove that (\ref{assumption1}) indeed holds whenever the initial data is sufficiently small.)

\section{A priori estimates}\label{sec:preliminary}
\subsection{Estimates of $\psi$}
Under assumption (\ref{assumption1}), elliptic estimates show that
\begin{align}
\|\nabla \psi - \rho \id\|_{L^\infty(B_1)} &\le C \|\nabla \psi - \rho \id\|_{H^{1.5}(B_1)} \le C \|\tth\|_{H^2(\bbS^1)} 
\label{psi_est1}
\end{align}
and for $1\le s\le \rK-1$,
\begin{align}
\|D^2\psi\|_{H^s(B_1)} \le C \|\tth\|_{H^{s+1.5}(\bbS^1)}. 
\label{psi_est}
\end{align}
Estimate (\ref{psi_est1}) implies that
\begin{align*}
\|\rho\rA - \id\|_{L^\infty(B_1)} &= \|(\rho \id - \nabla \psi) \rA\|_{L^\infty(B_1)} \le C \|\rA\|_{L^\infty(B_1)} \|\tth\|_{H^2(\bbS^1)} \\
&\le C \rho^{-1} \Big[\|\rho \rA - \id\|_{L^\infty(B_1)} + 1\Big] \|\tth\|_{H^2(\bbS^1)};
\end{align*}
thus under assumption (\ref{assumption1}),
\begin{align}
\|\rho\rA - \id\|_{L^\infty(B_1)} \le C \rho^{-1} \|\tth\|_{H^2(\bbS^1)}. \label{A-1_est}
\end{align}
Note that (\ref{psi_est1}) and (\ref{A-1_est}) together imply that for $0\le s\le \rK-2$,
\begin{align}
\|D \rA\|_{H^s(B_1)} \le C \rho^{-2} \|\tth\|_{H^{s+1.5}(\bbS^1)} 
. \label{A_est2}
\end{align}
Furthermore, with $\rJ \equiv \det(\nabla \psi)$, inequality (\ref{psi_est1})  implies that
\begin{align}
\|\rJ - \rho^2\|_{L^\infty(B_1)} \le C \|\tth\|_{H^2(\bbS^1)} \label{J-1_est}
\end{align}
and (\ref{A-1_est}) and (\ref{A_est2}) together with (\ref{J-1_est}) shows that
\begin{align}
\|D \rJ\|_{H^s(B_1)} \le C \rho \|\tth\|_{H^{s+1.5}(\bbS^1)}\,, \label{J_est2}
\end{align}
from which it follows that for $0\le s\le \rK-2$,
\begin{align}
\|D (\rJ \rA)\|_{H^s(B_1)} \le C \|\tth\|_{H^{s+1.5}(\bbS^1)}. \label{JA_est}
\end{align}

\subsection{Estimates of $\bar{\ttq}$ and $\bar{\bar{\ttq}}$}
Recall that $\displaystyle{}\bar{\bar\ttq} = - \frac{\mu}{2\pi} \Big[\log\Big(1+\frac{\tth}{\rho}\Big) - \frac{\tth}{\rho}\Big]$. Since $\big|\log(1+x) - x\big| \le x^2$ if $|x| < 0.5$, and
\begin{align*}
\bp \bar{\bar\ttq} = \frac{\rho^\pprime \tth \bp \tth}{\rho+\tth} ,
\end{align*}
we find that with the basic assumption (\ref{assumption1}), for $1\le s\le \rK-2.5$,
\begin{align}
\|\bar{\bar{\ttq}}\|_{H^s(\bbS^1)} \le C \rho^\pprime \rho^{-1} \|\tth\|_{H^2(\bbS^1)} \|\tth\|_{H^s(\bbS^1)} 
. \label{qbb_est}
\end{align}
Since $\bar{\ttq} = \bar{\bar{\ttq}} + \rho^{-1} - \rho^\pprime \tth$,
\begin{align}
\|\bar{\ttq}\|_{H^s(\bbS^1)} \le C \Big[\rho^{-1} \big(1 + \|\tth\|_{H^2(\bbS^1)} \|\tth\|_{H^s(\bbS^1)}\big) + \rho^\pprime \|\tth\|_{H^s(\bbS^1)} \Big]. \label{qb_est}
\end{align}

\subsection{Estimates of $f_1$ and $f_2$}
By the definition of $f_1$ and $f_2$, for $s\ge 1$,
\begin{align}
\|f_1\|_{H^s(B_1)} \le C \rho^{-2} \Big[\|\tth\|_{H^{s+1}(\bbS^1)} \|\ttq\|_{H^{1.5}(B_1)} + \|\tth\|_{H^2(\bbS^1)} \|\ttq\|_{H^{s+1}(B_1)} \Big] \label{f1_est_temp}
\end{align}
and
\begin{align}
\|f_2\|_{H^{s-1}(B_1)} \le C \rho^{-1} \Big[\|\tth\|_{H^{s+1}(\bbS^1)} \|\ttv\|_{H^{1.5}(B_1)} + \|\tth\|_{H^2(\bbS^1)} \|\ttv\|_{H^s(B_1)} \Big]. \label{f2_est_temp}
\end{align}

\subsection{Estimates of $\ttq$}
Before proceeding to the estimate of $\ttq$, we remark that since
$$\displaystyle{}{ \mathcal{E} } = \frac{h_\theta^2}{\rho^2(\rho+\tth)} - (\gamma+1) \bar{\bar{\ttq}} + \gamma \rho^\pprime \tth - \frac{1}{\rho} \rg\,,$$
by (\ref{qbb_est}) we obtain that
\begin{align}
\|{ \mathcal{E} }\|_{H^{s+0.5}(\bbS^1)} \le C \frac{1+\rho^\pprime}{\rho} \|\tth\|_{H^2(\bbS^1)} \Big[\|\tth\|_{H^{s+0.5}(\bbS^1)} + \rho^{-2} \|\tth\|_{H^{s+2.5}(\bbS^1)} \Big]. \label{g1_est}
\end{align}
Note that $\ttq$ satisfies
\begin{subequations}\label{ttq_eq}
\begin{alignat}{2}
\Delta \ttq &= \rho \big[\div f_1 - f_2\big] \qquad&&\text{in}\quad B_1\times (0,T),\\
\ttq &= - \frac{\tth_{\theta\theta}}{\rho^2} - \frac{\tth}{\rho^2} + \rho^\pprime \tth - \gamma \ttq + { \mathcal{E} } \qquad&&\text{on}\quad \bbS^1\times (0,T).
\end{alignat}
\end{subequations}
Elliptic estimates together with (\ref{qb_est})-(\ref{g1_est}) then show that
\begin{align*}
\|\ttq\|_{H^2(B_1)} &\le C \Big[ \rho \big(\|f_1\|_{H^1(B_1)} \hspace{-2pt}+ \|f_2\|_{L^2(B_1)}\big) \hspace{-2pt}+ \rho^{-2} \|\tth\|_{H^{3.5}(\bbS^1)} \hspace{-2pt}+ \rho^\pprime \|\tth\|_{H^{1.5}(\bbS^1)} \\
&\qquad \hspace{-2pt}+ \|\gamma \ttq\|_{H^{1.5}(\bbS^1)} \hspace{-2pt}+ \|{ \mathcal{E} }\|_{H^{1.5}(\bbS^1)} \Big] \\
&\le C \Big[\|\tth\|_{H^2(\bbS^1)}\big(\|\ttq\|_{H^2(B_1)} \hspace{-2pt}+ \|\ttv\|_{H^1(B_1)}\big) \hspace{-2pt}+ \rho^{-2} \|\tth\|_{H^{3.5}(\bbS^1)} \hspace{-2pt}+ \rho^\pprime \|\tth\|_{H^{1.5}(\bbS^1)} \\
&\qquad \hspace{-2pt}+ \|\tth\|_{H^2(\bbS^1)} \|\tth\|_{H^{1.5}(\bbS^1)} \hspace{-2pt}+ \rho^{-2} \|\tth\|_{H^2(\bbS^1)} \|\tth\|_{H^{2.5}(\bbS^1)} \hspace{-2pt}+ \|\gamma \ttq\|_{H^{1.5}(\bbS^1)} \Big].
\end{align*}
Since
\begin{align*}
\|\ttv\|_{H^1(B_1)} &\le \rho^{-1} \|\ttq\|_{H^2(B_1)} + \|f_1\|_{H^1(B_1)} \le C \big(\rho^{-1} + \|\tth\|_{H^2(\bbS^1)}\big) \|\ttq\|_{H^2(B_1)}, \\
\|\gamma \ttq\|_{H^{1.5}(\bbS^1)} &\le C \Big[\rho^{-1} \|\tth\|_{H^{1.5}(\bbS^1)} + \rho^{-2} \|\tth\|^2_{H^{2.5}(\bbS^1)}\Big] \|\ttq\|_{H^2(B_1)},
\end{align*}
by assumption (\ref{assumption1}) we find that
\begin{align*}
\|\ttq\|_{H^2(B_1)} &\le C \Big[\rho^{-2} \|\tth\|_{H^{3.5}(\bbS^1)} + \rho^\pprime \|\tth\|_{H^{1.5}(\bbS^1)} \Big]. 
\end{align*}
Similarly, elliptic estimates also show that for integers $1\le k \le K-1$,
\begin{align*}
\|\ttq\|_{H^{k+1}(B_1)}
&\le C \Big[\rho^{-1} \|\tth\|_{H^{k+1}(\bbS^1)} \big(\|\ttq\|_{H^{1.5}(B_1)} + \|\ttv\|_{H^{1.5}(B_1)}\big) \\
&\qquad + \rho^{-1} \|\tth\|_{H^2(\bbS^1)} \big(\|\ttq\|_{H^{k+1}(B_1)} + \|\ttv\|_{H^k(B_1)} \big) \\
&\qquad + \rho^{-2} \|\tth\|_{H^{k+2.5}(\bbS^1)} + \rho^\pprime \|\tth\|_{H^{k+0.5}(\bbS^1)} + \|\tth\|_{H^2(\bbS^1)} \|\tth\|_{H^k(\bbS^1)} \\
&\qquad + \rho^{-2} \|\tth\|_{H^2(\bbS^1)} \|\tth\|_{H^{k+1.5}(\bbS^1)}\Big] + C \|\gamma \ttq\|_{H^{k+0.5}(\bbS^1)}.
\end{align*}
Since
\begin{align*}
\|\ttv\|_{H^s(B_1)} &\le \rho^{-1} \|\ttq\|_{H^{s+1}(B_1)} + \|f_1\|_{H^s(B_1)} \\
&\le C \rho^{-1} \Big[\|\ttq\|_{H^{s+1}(B_1)} + \|\tth\|_{H^{3.5}(\bbS^1)} \|\tth\|_{H^{s+1}(\bbS^1)} \Big],
\end{align*}
and
\begin{align}
\|\gamma \ttq\|_{H^{s+0.5}(\bbS^1)} &\le C \Big[\|\gamma\|_{H^1(\bbS^1)} \|\ttq\|_{H^{s+1}(B_1)} + \|\gamma\|_{H^{s+0.5}(\bbS^1)} \|\ttq\|_{H^{1.5}(B_1)} \Big] \nonumber\\
&\le C \Big[\big(\rho^{-1} \|\tth\|_{H^1(\bbS^1)} + \rho^{-2} \|\tth\|^2_{H^2(\bbS^1)}\big) \|\ttq\|_{H^{s+1}(B_1)} \label{gq_est}\\
&\qquad + \rho^{-1} \|\tth\|_{H^{s+0.5}(\bbS^1)} + \rho^{-2} \|\tth\|_{H^2(\bbS^1)} \|\tth\|_{H^{s+1.5}(\bbS^1)} \|\tth\|_{H^{3.5}(\bbS^1)} \Big], \nonumber
\end{align}
we conclude that
\begin{align*}
\|\ttq\|_{H^{k+1}(B_1)} &\le C \Big[\rho^{-2} \|\tth\|_{H^{k+1}(\bbS^1)} \|\ttq\|_{H^{3}(B_1)} + \rho^{-1} \|\tth\|_{H^{3.5}(\bbS^1)} \|\tth\|_{H^{k+1}(\bbS^1)} \\
&\qquad + \rho^{-2} \|\tth\|_{H^{k+2.5}(\bbS^1)} + \rho^\pprime \|\tth\|_{H^{k+0.5}(\bbS^1)} + \|\tth\|_{H^2(\bbS^1)} \|\tth\|_{H^k(\bbS^1)} \\
&\qquad + \rho^{-2} \|\tth\|_{H^2(\bbS^1)} \|\tth\|_{H^{k+1.5}(\bbS^1)} \Big] .
\end{align*}
In particular, when $k=2$, using our basic assumption (\ref{assumption1}), we obtain that
\begin{align*}
\|\ttq\|_{H^{3}(B_1)} &\le C \Big[\rho^{-1} \|\tth\|_{H^{3.5}(\bbS^1)} \|\tth\|_{H^{3}(\bbS^1)} \\
&\qquad + \rho^{-2} \|\tth\|_{H^{4.5}(\bbS^1)} + \rho^\pprime \|\tth\|_{H^{2.5}(\bbS^1)} + \|\tth\|_{H^2(\bbS^1)}^2 \\
&\qquad + \rho^{-2} \|\tth\|_{H^2(\bbS^1)} \|\tth\|_{H^{3.5}(\bbS^1)} \Big]
\end{align*}
and is hence made small by (\ref{assumption1}).   This,
 in turn,  implies that
\begin{align}
&\|\ttv\|_{H^k(B_1)} + \rho^{-1} \|\ttq\|_{H^{k+1}(B_1)} \le C \Big[ \rho^{-2} \|\tth\|_{H^{4.5}(\bbS^1)} \|\tth\|_{H^{k+1}(\bbS^1)} + \rho^{-3} \|\tth\|_{H^{k+2.5}(\bbS^1)} \nonumber\\
&\qquad\qquad\quad + \frac{\rho^\pprime}{\rho} \|\tth\|_{H^{k+0.5}(\bbS^1)} + \rho^{-3} \|\tth\|_{H^2(\bbS^1)} \|\tth\|_{H^{k+1.5}(\bbS^1)} \Big] \nonumber\\
&\qquad\qquad \le C \rho^{-1} \Big[\|\tth\|_{H^{k+2.5}(\bbS^1)} + \rho^\pprime \|\tth\|_{H^{k+0.5}(\bbS^1)}\Big]. \label{ttvq_est}
\end{align}
Consequently,
\begin{equation}\label{f1f2_est}
\begin{array}{l}
\displaystyle{} \|f_1\|_{H^s(B_1)} + \|f_2\|_{H^{s-1}(B_1)} \vspace{.2cm}\\
\displaystyle{}\qquad\qquad\ \ \le C \rho^{-2}
 \Big[ \|\tth\|_{H^{s+2.5}(\bbS^1)} + \rho^\pprime \|\tth\|_{H^{s+0.5}(\bbS^1)} \Big]
\end{array}
\end{equation}
for $1\le s\le \rK-2$, where we have used the basic assumption $ \|\tth\|_{H^{2.5}(\bbS^1)} \le \sigma $.

\subsection{Estimates of $\rr$ and $\rw$}
Applying elliptic estimates to (\ref{req1}), using (\ref{f1f2_est}) we find that
\begin{align}
\|\rr\|_{H^{s+1}(B_1)} &\le C \rho^{-1} \Big[\|f_1\|_{H^s(B_1)} + \|f_2\|_{H^{s-1}(B_1)} \Big] \nonumber\\
&\le C \rho^{-3}\|\tth\|_{H^{2.5}(\bbS^1)} \Big[ \|\tth\|_{H^{s+2.5}(\bbS^1)} + \rho^\pprime \|\tth\|_{H^{s+0.5}(\bbS^1)} \Big]. \label{r2_est}
\end{align}
Since $\rw = f_1 - \nabla \rr$, we also obtain that
\begin{align}
\|\rw\|_{H^s(B_1)} &\le \|f_1\|_{H^s(B_1)} + \|\rr\|_{H^{s+1}(B_1)} \nonumber\\
&\le C \rho^{-2} \|\tth\|_{H^{2.5}(\bbS^1)} \Big[ \|\tth\|_{H^{s+2.5}(\bbS^1)} + \rho^\pprime \|\tth\|_{H^{s+0.5}(\bbS^1)} \Big].
. \label{rw1_est}
\end{align}

\subsection{Estimates of $v$ and $q$ in terms of $\tth$}
Since $v = \ttv - \rw$ and $q = \ttq - \rr$,
\begin{align}
&\|v\|_{H^s(B_1)} + \rho^{-1} \|q\|_{H^{s+1}(B_1)} \nonumber\\
&\qquad\quad \le \|\ttv\|_{H^s(B_1)} + \|\rw\|_{H^s(B_1)} + \rho^{-1} \|\ttq\|_{H^{s+1}(B_1)} + \rho^{-1} \|\rr\|_{H^{s+1}(B_1)} \nonumber\\
&\qquad\quad \le C \rho^{-1} \Big[\|\tth\|_{H^{s+2.5}(\bbS^1)} + \rho^\pprime \|\tth\|_{H^{s+0.5}(\bbS^1)}\Big]. \label{vq_in_terms_of_h}
\end{align}
Moreover, since $\nabla q = \rho v = \rho \ttv - \rho \rw$,
\begin{align*}
\|q\|_{H^{s+1}(B_1)} &\le \rho \big[\|\ttv\|_{H^s(B_1)} + \|\rw\|_{H^s(B_1)} \big] + \|q\|_{H^2(B_1)} \nonumber\\
&\le \rho \|\ttv\|_{H^s(B_1)} + C \Big[\|\tth\|_{H^{2.5}(\bbS^1)} + \rho^\pprime \|\tth\|_{H^{0.5}(\bbS^1)}\Big]
\end{align*}
which together with (\ref{r2_est}) further implies that
\begin{align*}
\|\ttq\|_{H^{s+1}(B_1)} &\le \rho \|\ttv\|_{H^s(B_1)} + C \Big[\|\tth\|_{H^{2.5}(\bbS^1)} + \rho^\pprime \|\tth\|_{H^{0.5}(\bbS^1)}\Big] \\
&\quad + C \rho^{-3} \|\tth\|_{H^{2.5}(\bbS^1)} \Big[ \|\tth\|_{H^{s+2.5}(\bbS^1)} + \rho^\pprime \|\tth\|_{H^{s+0.5}(\bbS^1)} \Big]. \nonumber
\end{align*}
Therefore,
\begin{equation}\label{ttq_in_terms_of_ttv}
\begin{array}{l}
\displaystyle{} \|\ttq\|_{H^{s+1}(B_1)} \vspace{.2cm}\\
\displaystyle{}\quad\ \le \rho \|\ttv\|_{H^s(B_1)} + C (1+\rho^\pprime) \Big[\|\tth\|_{H^{2.5}(\bbS^1)} + \rho^{-3} \|\tth\|_{H^{2.5}(\bbS^1)} \|\tth\|_{H^{s+2.5}(\bbS^1)}\Big].
\end{array}
\end{equation}
On the other hand, we also have
\begin{align*}
&\|\nabla \ttq\|_{H^s(B_1)} \le \|(\rho \rA^\rT - \id) \nabla \ttq\|_{H^s(B_1)} + \rho \|\rA^\rT \nabla \ttq\|_{H^s(B_1)} \nonumber\\
&\ \le \rho \|\ttv\|_{H^s(B_1)} \hspace{-2pt} + C \Big[\|\id - \rho\rA\|_{L^\infty(\bbS^1)} \|\nabla\ttq\|_{H^s(B_1)} \hspace{-2pt} + \rho \|D \rA\|_{W^{s-1,4}(B_1)} \|\nabla \ttq\|_{L^4(B_1)}\Big];
\end{align*}
thus by estimates (\ref{A-1_est}), (\ref{A_est2}) and assumption (\ref{assumption1}) we find that for $1\le s\le \rK-1$,
\begin{align}
\|\nabla \ttq\|_{H^{s+1}(B_1)} \le \rho \|\ttv\|_{H^s(B_1)}. \label{ttq_in_terms_of_ttv1}
\end{align}

\subsection{Elliptic estimates for the height function $\tth$}
We may rewrite the boundary condition (\ref{HS_ALE3}c) as
\begin{align*}
- \tth_{\theta\theta} + \rho^2 \rho^\pprime \tth = \tth + \rho^2 \big[\ttq + \gamma \ttq - { \mathcal{E} } \big].
\end{align*}
Elliptic estimates then imply that for $1\le s\le K-1$,
\begin{align*}
&\|\tth\|^2_{H^{s+2.5}(\bbS^1)} + \rho^2 \rho^\pprime \|\tth\|^2_{H^{s+1.5}(\bbS^1)} \\
&\qquad \le C \|\tth\|^2_{H^{s+0.5}(\bbS^1)} + \rho^4 \Big[\|\ttq\|^2_{H^{s+1}(B_1)} + \|\gamma \ttq\|^2_{H^{s+1}(B_1)} + \|{ \mathcal{E} }\|^2_{H^{s+0.5}(\bbS^1)}\Big];
\end{align*}
thus by (\ref{g1_est}), (\ref{gq_est}), (\ref{r2_est}) and (\ref{ttq_in_terms_of_ttv}) together with the smallness of $\|\tth\|_{H^6(\bbS^1)}$,
 as well as the boundedness of $\rho^\pprime/\rho$, we find that
\begin{align*}
& \|\tth\|^2_{H^{s+2.5}(\bbS^1)} + \rho^2 \rho^\pprime \|\tth\|^2_{H^{s+1.5}(\bbS^1)} \le C \Big[\rho^6 \|\ttv\|^2_{H^s(B_1)} + \|\tth\|^2_{H^{s+1.5}(\bbS^1)} \\
&\qquad\qquad\quad + \rho^2 \|\tth\|^2_{H^{s+0.5}(\bbS^1)} + \rho^4 (1+\rho^{\pprime 2}) \|\tth\|^2_{H^{2.5}(\bbS^1)} \Big].
\end{align*}
It then follows that for $1\le s\le \rK-1$,
\begin{align}
& \rho^{-3} \|\tth\|^2_{H^{s+2.5}(\bbS^1)} 
\le C \Big[\rho^3 \|\ttv\|^2_{H^s(B_1)} \hspace{-2pt}+ \big(\rho^{-3} + \rho \rho^{\pprime 2}\big) \|\tth\|^2_{H^{2.5}(\bbS^1)} \Big].
\label{elliptic_est1}
\end{align}

\subsection{An elliptic estimate via the Hodge decomposition}
\begin{lemma}\label{lem:hodge}
Suppose that $v\in L^2(B_1)$ satisfies $\div v = \curl v =0$, and the tangential derivatives $\bp^\ell v \in L^2(B_1)$ for all $\ell=1,2\cdots,k$. Then $v\in H^k(B_1)$ and satisfies
\begin{align}
\|v\|_{H^k(B_1)} 
&\le C \sum_{j=0}^k \|\bp^j v\|_{L^2(B_1)}. \label{hodge}
\end{align}
\end{lemma}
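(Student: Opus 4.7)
The plan is to prove this by induction on $k$. The central observation is that, although $\bar\partial = x\partial_y - y\partial_x$ is a first-order operator with non-constant coefficients, it commutes with $\div$ and $\curl$ modulo lower-order zeroth-order terms that cancel under our hypotheses. A direct computation gives the commutator identities
\begin{align*}
\div(\bar\partial v) &= \bar\partial(\div v) - \curl v,\\
\curl(\bar\partial v) &= \bar\partial(\curl v) + \div v,
\end{align*}
so that if $v$ is divergence- and curl-free, then so is $\bar\partial v$. Moreover, from $\div v = \curl v = 0$ one sees immediately that each component $v^i$ is harmonic. These two facts together allow the lemma to be propagated in $k$ by applying the result at level $k-1$ to the vector field $\bar\partial v$.

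The base case $k = 1$ is where the real content sits. The plan is to use the two-dimensional Hodge identity
\begin{equation*}
|\nabla v|^2 = (\div v)^2 + (\curl v)^2 - 2\det(\nabla v),
\end{equation*}
combined with the fact that $\det(\nabla v)$ is a null-Lagrangian and can be written in divergence form as $\det(\nabla v) = \tfrac12\bigl[\partial_1(v^1 v^2{}_{,2} - v^2 v^1{}_{,2}) + \partial_2(v^2 v^1{}_{,1} - v^1 v^2{}_{,1})\bigr]$. Integrating over $B_1$ and applying the divergence theorem, together with the elementary fact that on $\partial B_1$ the combination $v^2{}_{,2} n_1 - v^2{}_{,1} n_2$ equals exactly $\bar\partial v^2$ (and analogously for $v^1$), reduces the Dirichlet integral to a boundary pairing. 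Under $\div v = \curl v = 0$ this gives the clean identity
\begin{equation*}
\|\nabla v\|_{L^2(B_1)}^2 = -2\int_{\partial B_1} v^1\, \bar\partial v^2\, d\sigma.
\end{equation*}

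To close the $k=1$ estimate I would estimate the boundary integral by $\|v\|_{L^2(\partial B_1)}\|\bar\partial v\|_{L^2(\partial B_1)}$ and then use the trace/interpolation bound $\|u\|_{L^2(\partial B_1)}^2 \le C\bigl(\|u\|_{L^2(B_1)}^2 + \|u\|_{L^2(B_1)}\|\nabla u\|_{L^2(B_1)}\bigr)$ applied to both $v$ and $\bar\partial v$. The key point is that $\nabla (\bar\partial v)$ can, in turn, be controlled via a second application of the Hodge identity (using that $\bar\partial v$ is again divergence- and curl-free), producing only $\|\bar\partial^2 v\|_{L^2}$ on the right; since $\bar\partial^j v \in L^2(B_1)$ for $j \le k$ is hypothesized, a finite number of such applications, combined with Young's inequality to absorb small multiples of $\|\nabla v\|_{L^2}$ into the left-hand side, yields the desired bound for $k=1$.

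The induction step $k-1 \to k$ then proceeds by applying the base-case argument to $\bar\partial v$ (which satisfies the hypotheses at level $k-1$), giving $\|\bar\partial v\|_{H^{k-1}(B_1)} \le C\sum_{j=1}^k \|\bar\partial^j v\|_{L^2}$; the remaining normal derivatives of $v$ are recovered from the algebraic structure of $\div v = \curl v = 0$, which in polar coordinates reads $(r v^r)_r = -v^\theta_{\theta}$ and $(r v^\theta)_r = v^r_{\theta}$, so radial derivatives are tangential derivatives in disguise. The main technical obstacle is the base case: one must be careful to avoid circularity when bounding $\|\bar\partial v\|_{L^2(\partial B_1)}$ by interior norms, since this trace naively requires $\bar\partial v \in H^{1/2}(B_1)$, which is essentially the very conclusion we are trying to establish. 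A standard approximation argument (regularize $v$ by a family of smooth divergence- and curl-free fields, prove the estimate there, and pass to the limit) is the cleanest way to handle this.
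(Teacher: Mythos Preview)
Your overall strategy --- the commutator identities showing that $\bar\partial$ preserves the divergence- and curl-free structure, the null-Lagrangian identity $|\nabla v|^2 = (\div v)^2 + (\curl v)^2 - 2\det(\nabla v)$, and the induction on $k$ --- is sound and attractive. But there is a genuine gap in the base case.

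The difficulty is in bounding the boundary term $\int_{\partial B_1} v^1\,\bar\partial v^2\,d\sigma$ when only $v,\bar\partial v \in L^2(B_1)$ are available. Your chain reads: bound by $\|v\|_{L^2(\partial B_1)}\|\bar\partial v\|_{L^2(\partial B_1)}$, control these traces via $\|u\|_{L^2(\partial B_1)}^2 \le C\bigl(\|u\|_{L^2}^2 + \|u\|_{L^2}\|\nabla u\|_{L^2}\bigr)$, and then control $\|\nabla(\bar\partial v)\|_{L^2}$ by a second application of the Hodge identity. But that second application produces the boundary trace of $\bar\partial^2 v$, whose trace bound in turn calls for $\|\nabla(\bar\partial^2 v)\|_{L^2}$, and so on. The recursion never terminates within the hypotheses $\bar\partial^j v \in L^2$ for $j\le k$: each step consumes one more tangential derivative than is available. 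Concretely, for $k=1$ you are already invoking $\bar\partial^2 v$, which is not in the hypothesis. A Fourier-mode check confirms that this is a real obstruction rather than a technicality: for $v^2 + iv^1 = z^n$ one has $\|\bar\partial v\|_{L^2(\partial B_1)}^2 \sim n^2$ while $\|v\|_{L^2(B_1)}^2 + \|\bar\partial v\|_{L^2(B_1)}^2 \sim n$, so the boundary $L^2$ norm of $\bar\partial v$ is genuinely out of reach of the interior data. The approximation argument you propose does not help here, since the issue is the \emph{constant} in the estimate, not the justification of the manipulations.

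The paper avoids this by using the \emph{normal trace estimate}: for divergence-free $w\in L^2(B_1)$ one has $\|w\cdot \rN\|_{H^{-1/2}(\partial B_1)} \le C\|w\|_{L^2(B_1)}$ with no gradient on the right, and the analogous tangential version holds when $\curl w = 0$. Starting from the Hodge-type elliptic estimate $\|v\|_{H^k} \le C\bigl(\|v\|_{L^2} + \|v\cdot \rN\|_{H^{k-1/2}(\partial B_1)}\bigr)$, the paper iteratively trades boundary regularity for factors of $\bar\partial$ (using $\bar\partial \rN = \rT$, $\bar\partial \rT = -\rN$), and the recursion terminates at $H^{-1/2}$ norms of $(\bar\partial^j v)\cdot \rN$ and $(\bar\partial v)\cdot \rT$, which are controlled by $\|\bar\partial^j v\|_{L^2(B_1)}$ via these trace estimates. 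Your boundary integral can in fact be rescued by the same device: estimate $\int_{\partial B_1} v^1\,\bar\partial v^2$ as an $H^{1/2}$--$H^{-1/2}$ pairing, bound $\|v^1\|_{H^{1/2}(\partial B_1)} \le C\|v\|_{H^1}$ (to be absorbed by Young), and bound $\|\bar\partial v^2\|_{H^{-1/2}(\partial B_1)}$ by decomposing $e_2 = \rN\sin\theta + \rT\cos\theta$ and invoking the normal/tangential trace on $\bar\partial v$. Once this ingredient is added, your argument closes; without it, the base case does not.
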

\begin{proof}
For $k \in \bbN$,
\begin{align*}
\|v\|_{H^k(B_1)} \le C \Big[\|v\|_{L^2(B_1)} \hspace{-2pt}+ \|\curl v\|_{H^{k-1}(B_1)} \hspace{-2pt}+ \|\div v\|_{H^{k-1}(B_1)} \hspace{-2pt}+ \|v\cdot \rN\|_{H^{k+0.5}(\bbS^1)} \Big].
\end{align*}
Since $\curl v = \div v = 0$, we find that
\begin{align*}
&\|v\|_{H^k(B_1)} \le C \Big[\|v\|_{L^2(B_1)} \hspace{-2pt}+ \|v \cdot \rN\|_{H^{k-0.5}(\bbS^1)} \Big] \\
&\quad\le C \Big[\|v\|_{L^2(B_1)} \hspace{-2pt}+ \|\bp (v\cdot \rN)\|_{H^{k - 1.5}(\bbS^1)} \Big] \\
&\quad\le C \Big[\|v\|_{L^2(B_1)} \hspace{-2pt}+ \|\bp v \cdot \rN\|_{H^{k - 1.5}(\bbS^1)} \hspace{-2pt}+ \|v \cdot \rT\|_{H^{k-1.5}(\bbS^1)} \Big] \\
&\quad\le C \Big[\|v\|_{L^2(B_1)} \hspace{-2pt}+ \|\bp^2 v\cdot \rN\|_{H^{k-2.5}(\bbS^1)} \hspace{-2pt}+ \|\bp v\cdot \rT\|_{H^{k - 2.5}(\bbS^1)} \hspace{-2pt}+ \|\bp(v\cdot \rT) \|_{H^{k-2.5}(\bbS^1)} \Big] \\
&\quad\le C \Big[\|v\|_{L^2(B_1)} \hspace{-2pt}+ \|\bp v\cdot \rN\|_{H^{k - 2.5}(\bbS^1)} \hspace{-2pt}+ \|\bp^2 v\cdot \rN\|_{H^{k-2.5}(\bbS^1)} \hspace{-2pt}+ \|\bp v \cdot \rT\|_{H^{k-2.5}(\bbS^1)} \Big] \\
&\quad\le C \Big[\|v\|_{L^2(B_1)} \hspace{-2pt}+ \sum_{j=1}^k \|\bp^j v\cdot \rN\|_{H^{-0.5}(\bbS^1)} \hspace{-2pt}+ \|\bp v\cdot \rT\|_{H^{-0.5}(\bbS^1)} \Big] \,.
\end{align*}
Again, with $\div v = \curl v =0$, we see that $\|v\cdot \rN\|_{H^{k+0.5}(\bbS^1)} \le C \|v\|_{H^k(B_1)}$
\end{proof}

\section{Decay estimates for $\|\tth\|_{H^{2.5}(\bbS^1)}$}\label{sec:low_norm_est}
\subsection{The Fourier representation of solutions}
Let $\rG_1 = \rG + \gamma \ttq$, and express $\tth$ and $\rG_1$ in terms of their Fourier series:
\begin{align*}
\tth(\theta,t) = \frac{1}{\sqrt{2\pi}} \sum\limits_{k\in \bbZ} \ft{\tth}_k(t) e^{ik\theta}, \quad \rG_1(\theta,t) = \frac{1}{\sqrt{2\pi}} \sum\limits_{k\in\bbZ} \ft{\rG_1}_k(t) e^{ik\theta}. 
\end{align*}
Since $q$ is harmonic in $B_1$ with the Dirichlet boundary condition (\ref{HS_ALE3}c), the Poisson integral formula shows that
\begin{align*}
q(r,\theta,t) 
= \frac{1}{\sqrt{2\pi}} \sum_{k\in \bbZ} \Big[\frac{|k|^2-1}{\rho^2} \ft{\tth}_k + \rho^\pprime \ft{\tth}_k + \ft{\rG_1}_k \Big] r^{|k|} e^{ik\theta} \quad \text{for}\quad r<1\,.
\end{align*}
Taking the inner product of (\ref{HS_ALE3}) and the vector $\N$, by (\ref{tvdotn1}) and that $\displaystyle{}\frac{\p q}{\p \rN} = \frac{\p q}{\p r}\Big|_{r=1}$, we find that 
\begin{align*}
&\tth_t + \rho^\pprime + \frac{1}{\rho} \frac{\p q}{\p r}\Big|_{r=1} = \frac{h_\theta \nabla q\cdot\rT}{\rho(\rho + \tth)} \Big|_{r=1} - \frac{h_\theta (\rw \cdot \rT)}{\rho + \tth}\Big|_{r=1} + \frac{1}{2\pi} \int_{B_1} f_2\, dx
\end{align*}
or 
\begin{equation}\label{hk_id_temp}
\begin{array}{l}
\displaystyle{} \tth_t + \frac{1}{\sqrt{2\pi}} \sum_{k\in\bbZ} \Big[\frac{|k|(|k|^2-1)}{\rho^3} + |k| \frac{\rho^\pprime}{\rho} \Big]\ft{\tth}_k e^{ik\theta} = - \frac{1}{\sqrt{2\pi} \rho}\sum_{k\ne 0} |k| \ft{\rG_1}_k e^{ik\theta} \vspace{.2cm}\\
\displaystyle{} \hspace{50pt} + \frac{h_\theta}{\rho(\rho+\tth)} \frac{\p q}{\p \theta} \Big|_{r=1} - \frac{h_\theta (\rw \cdot \rT)}{\rho+\tth}\Big|_{r=1} + \frac{1}{2\pi} \int_{B_1} f_2\, dx.
\end{array}
\end{equation}
Therefore, for $k\ne 0,\pm 1$ we have
\begin{align}
\frac{d\ft{\tth}_k}{dt} + \Big[\frac{|k|(|k|^2-1)}{\rho^3} + |k|\frac{\rho^\pprime}{\rho} \Big] \ft{\tth}_k = \ft{R}_k, \label{hk_id1}
\end{align}
where $\ft{R}_k$ is the Fourier coefficient of $R$ defined as the right-hand side of (\ref{hk_id_temp}).
Note that estimates (\ref{f2_est_temp}) and (\ref{r2_est}) then suggest that
\begin{align}
\|R\|_{H^{2.5}(\bbS^1)} &\le C \Big[\|\rG + \gamma \ttq\|_{H^{3.5}(\bbS^1)} + \rho^{-1} \|\tth\|_{H^{2.5}(\bbS^1)} \|q\|_{H^4(B_1)} \nonumber\\
&\qquad + \rho^{-1} \|\tth\|_{H^{3.5}(\bbS^1)} \|q\|_{H^{2.5}(B_1)} \Big] \nonumber\\
&\le C \rho^{-1} \|\tth\|_{H^{2.5}(\bbS^1)} \Big[\rho^{-1} \|\tth\|_{H^{5.5}(\bbS^1)} + \rho^\pprime \|\tth\|_{H^{3.5}(\bbS^1)} \Big]. \label{R_est_insu}
\end{align}

\subsection{The $H^{2.5}$-decay estimate}\label{sec:H25_decay}
Define
\begin{align*}
I_k(t) &= \int_0^t \Big[\frac{|k|(|k|^2-1)}{\rho^3} + |k| \frac{\rho^\pprime}{\rho}\Big](t') dt' = \int_0^t \frac{|k|(|k|^2-1)}{\rho(t')^3}\, dt' + |k|\log \rho(t) \,.
\end{align*}
Then the use of $I_k(t)$ as the integrating factor in (\ref{hk_id1}) implies that
\begin{align*}
\ft{\tth}_k(t) = e^{-I_k(t)} \ft{\tth_0}_k + \int_0^t e^{-I_k(t) + I_k(s)} \ft{R}_k(s) ds
\end{align*}
Since $d(t) + 2\log \rho(t) = I_2(t) \le I_k(t)$ for all $|k|\ge 2$, and $\rho(s)\le \rho(t)$ for all $s\le t$, we find that
\begin{align*}
|\ft{\tth}_k(t)| \le 
\frac{e^{-d(t)}}{\rho(t)^2} \Big[|\ft{\tth_0}_k| + \int_0^t \rho(s)^2 e^{d(s)} |\ft{R}_k(s)| ds \Big]\qquad \Forall |k|\ge 2; 
\end{align*}
thus by \Holder's inequality,
\begin{equation}\label{decay_estimate_insu}
\begin{array}{l}
\displaystyle{} \sum_{k\ne 0,\pm 1} (1+|k|^5) |\ft{\tth}_k(t)|^2 \\
\displaystyle{} \qquad\quad \le \frac{C e^{-2d(t)}}{\rho(t)^4} \Big[\|\tth_0\|^2_{H^{2.5}(\bbS^1)} + t \int_0^t \rho(s)^4 e^{2 d(s)} \|R(s)\|^2_{H^{2.5}(\bbS^1)} ds\Big] .
\end{array}
\end{equation}
Since $\rho^\pprime(t)$ is bounded, by (\ref{R_est_insu}) and interpolation we obtain that
\begin{align}
& \int_0^t \rho(s)^4 e^{2 d(s)} \|R(s)\|^2_{H^{2.5}(\bbS^1)} ds \label{decay_estimate_process}\\
& \qquad \le C \hspace{-1pt}\int_0^t \hspace{-2pt} e^{2d(s)} \|\tth(s)\|^2_{H^{2.5}(\bbS^1)} \Big[\|\tth(s)\|^2_{H^{5.5}(\bbS^1)} \hspace{-2pt}+ \rho(s)^2 \rho^\pprime(s)^2 \|\tth(s)\|^2_{H^{3.5}(\bbS^1)} \Big] ds. \nonumber
\end{align}

\subsubsection{The case that $\rho$ satisfies {\rm(\ref{rho_assumption1})}}
In this case, by interpolation (\ref{decay_estimate_process}) implies that
\begin{align*}
& \int_0^t \rho(s)^4 e^{2 d(s)} \|R(s)\|^2_{H^{2.5}(\bbS^1)} ds \le C \hspace{-1pt}\int_0^t \|\tth(s)\|^{16/7}_{H^{2.5}(\bbS^1)} \|\tth(s)\|^{12/7}_{H^6(\bbS^1)} ds \\
&\qquad\quad + C \int_0^t \rho(s)^2 \rho^\pprime(s)^2 \|\tth(s)\|^{16/5}_{H^{2.5}(\bbS^1)} \|\tth(s)\|^{4/5}_{H^5(\bbS^1)} ds.
\end{align*}
Since $\displaystyle{} \|\tth(t)\|_{H^6(\bbS^1)} + \rho(t)^2 e^{\beta d(t)} \|\tth(t)\|_{H^{2.5}(\bbS^1)} \le \triplenorm{\tth}_T$, we find that
\begin{align*}
&\int_0^t \rho(s)^4 e^{2d(s)} \|R(s)\|^2_{H^{2.5}(\bbS^1)} ds \\
&\qquad\qquad \le C \Big[\int_0^t \Big(\frac{e^{(2-\frac{16}{7} \beta) d(s)}}{\rho(s)^{32/7}} + \frac{\rho^\pprime(s)^2 e^{(2- \frac{24}{7}\beta) d(s)}}{\rho(s)^{34/7}}\Big) ds \Big] \triplenorm{\tth}^4_T \\
&\qquad\qquad \le C \Big[\int_0^t \Big(\frac{1}{\rho(s)^{11/7}} + \frac{\rho^\pprime(s)^2}{\rho(s)^{13/7}} \Big) e^{(2-\frac{16}{7} \beta) d(s)} d^\pprime(s) \Big) ds\Big] \triplenorm{\tth}^4_T;
\end{align*}
thus by (\ref{rho_assumption1}), we obtain that
\begin{align*}
& \int_0^t \rho(s)^4 e^{2d(s)} \|R(s)\|^2_{H^{2.5}(\bbS^1)} ds \le C \int_0^t e^{(2-\frac{16}{7}\beta) d(s)} d^\pprime(s) ds \\
&\qquad\qquad = \frac{C}{2 - \frac{16}{7}\beta}\, e^{(2-\frac{16}{7}\beta) d(s)} \Big|_{s=0}^{s=t} \le C_\beta e^{(2 - \frac{16}{7}\beta) d(t)}
\end{align*}
if $\displaystyle{}2 - \frac{16}{7}\beta > 0$ or $\displaystyle{}\beta < \frac{7}{8}$\,. Inequality (\ref{decay_estimate_insu}) then implies that
\begin{align}
\sum_{|k|\ge 2} (1+|k|^5) |\ft{\tth}_k(t)|^2 &\le C \frac{e^{-2d(t)}}{\rho(t)^4} \Big[\|\tth_0\|^2_{H^{2.5}(\bbS^1)} + C_\beta t e^{(2-\frac{16}{7}\beta) d(t)} \triplenorm{\tth}^4_T \Big] \nonumber\\
&\le C \frac{e^{-2\beta d(t)}}{\rho(t)^4} \Big[\|\tth_0\|^2_{H^{2.5}(\bbS^1)} + C_\beta \triplenorm{\tth}^4_T \Big], \label{decay_estimate_temp}
\end{align}
where we conclude the last inequality from $\displaystyle{} \sup_{t> 0} t e^{2\beta d(t)/7} < \infty$ which is a direct consequence of assumption (\ref{rho_assumption1}) as well.

\subsubsection{The case that $\rho$ satisfies assumption {\rm(\ref{rho_assumption2})}}
In this case,
\begin{align*}
d(t) = \int_0^t \frac{6}{\rho(s)^3} ds \le C \int_0^\infty \frac{1}{(1+t)^{3\nu^-}} ds < \infty \qquad \Forall t>0;
\end{align*}
thus $d$ is bounded. 
Since
\begin{align*}
\|\tth(t)\|_{H^k(\bbS^1)} + \rho(t)^{2 - \frac{1}{2\nu^-}} \|\tth(t)\|_{H^{2.5}(\bbS^1)} \le \triplenorm{\tth}_T,
\end{align*}
by interpolation,  (\ref{decay_estimate_process}) implies that
\begin{align*}
& \int_0^t \rho(s)^4 e^{2d(s)} \|R(s)\|^2_{H^{2.5}(\bbS^1)} ds \le C \int_0^t \|\tth(s)\|_{H^{2.5}(\bbS^1)}^{2 + 2 \frac{\rK -5.5}{\rK -2.5}} \|\tth(s)\|^{\frac{6}{\rK -2.5}}_{H^\rK (\bbS^1)} ds \\
&\qquad\qquad + C \int_0^t \rho(s)^2 \rho^\pprime(s)^2 \|\tth(s)\|^{2+2\frac{\rK -3.5}{\rK -2.5}}_{H^{2.5}(\bbS^1)} \|\tth(s)\|^{\frac{2}{\rK -2.5}}_{H^\rK (\bbS^1)} ds \\
&\quad \le C \Big[\int_0^t \Big(\rho(s)^{\frac{1-4\nu^-}{\nu^-}\frac{2\rK-8}{\rK-2.5}} + \rho(s)^{\frac{1-4\nu^-}{\nu^-} \frac{2\rK-6}{\rK-2.5} + 2} \rho^\pprime(s)^2 \Big) ds\Big] \triplenorm{\tth}^4_T .
\end{align*}
Note that by the definition of $\rK$, $\displaystyle{}\rK > \frac{64\nu-21}{16\nu -6}$; thus by (\ref{rho_assumption2}),
\begin{align*}
\rho(s)^{\frac{1-4\nu^-}{\nu^-}\frac{2\rK-8}{\rK-2.5}} + \rho(s)^{\frac{1-4\nu^-}{\nu^-} \frac{2\rK-6}{\rK-2.5} + 2} \rho^\pprime(s)^2 \le C_\sigma (1+t)^{-\sigma}
\end{align*}
for some $\sigma = \sigma(\nu) > 1$. Therefore,
\begin{align*}
\frac{t e^{-2d(t)}}{\rho(t)^4} \int_0^t e^{2d(s)} \|R(s)\|^2_{H^{2.5}(\bbS^1)} ds \le \frac{C_\sigma t}{\rho(t)^4} \int_0^\infty (1+t)^{-\sigma} ds \le \frac{C t}{\rho(t)^4} \,;
\end{align*}
and we then conclude from (\ref{decay_estimate_insu}) that
\begin{align}
\frac{\rho(t)^4}{1+t} \sum_{|k|\ge 2} (1+|k|^5) |\ft{\tth}_k(t)|^2 \le C \Big[\|\tth_0\|^2_{H^{2.5}(\bbS^1)} + \triplenorm{\tth}^4_T\Big]. \label{decay_estimate_temp1}
\end{align}

\subsection{The  mass and the center of mass}\label{sec:center_of_mass}
For ${\bf x} \in \Omega(t)$, we write  ${\bf x} = (x,y)$.  Equation 
 (\ref{defn:rho}) shows that $\int_{\Omega(t)} = \pi \rho(t)^2$; hence, 
 \begin{align*}
\pi \rho(t)^2 = \frac{1}{2} \oint (xdy - ydx) = \frac{1}{2} \int_0^{2\pi} \big(\rho(t)+\tth(\theta,t)\big)^2 d\theta.
\end{align*}
As a consequence,
\begin{align}
\ft{\tth}_0(t) &= - \frac{1}{2\rho} \ft{\tth^2}_0(t) = - \frac{1}{2\rho} \sum_{k\in\bbZ} \ft{\tth}_k \ft{\tth}_{-k} \nonumber\\
&= - \frac{1}{2\rho} \Big[|\ft{\tth}_0|^2 + 2 \ft{\tth}_1 \ft{\tth}_{-1} \Big] - \frac{1}{2\rho} \sum_{k\ne -1,0,1} \ft{\tth}_k \ft{\tth}_{-k}. \label{ft_h_0}
\end{align}
Moreover, by Green's identity and the fact that that $\displaystyle{}\int_{\Gamma(t)} \rH n dS = 0$,
 we see that
\begin{align*}
\frac{d}{dt} \int_{\Omega(t)} (x,y) dA &= \int_{\Gamma(t)} (x,y) (u\cdot n) dS = - \int_{\Gamma(t)} (x,y) \frac{\p p}{\p n}\, dS \\
&= - \int_{\Gamma(t)} \frac{\p (x,y)}{\p n} p \, dS + \int_{\Omega(t)} (x,y) \Delta p\, dx = - \int_{\Gamma(t)} \rH n dS = 0.
\end{align*}
Therefore, the center of mass of $\Omega(t)$ does not change in time; thus
\begin{align*}
x_0 = \int_{\Omega(t)} x dA = \int_0^{2\pi} \int_0^{\rho+\tth} r^2 \cos\theta dr d\theta = \frac{1}{3} \int_0^{2\pi} (\rho+\tth)^3 \cos \theta d\theta
\end{align*}
and
\begin{align*}
y_0 = \int_{\Omega(t)} y dA = \int_0^{2\pi} \int_0^{\rho+\tth} r^2 \sin\theta dr d\theta = \frac{1}{3} \int_0^{2\pi} (\rho+\tth)^3 \sin \theta d\theta.
\end{align*}
Letting $x_0 + iy_0 = r_0 e^{i\theta_0}$, we see that
\begin{align*}
\int_0^{2\pi} \big(\rho(t)+\tth(\theta,t)\big)^3 e^{\pm i \theta} d\theta = 3 r_0 e^{\pm i\theta_0};
\end{align*}
thus
\begin{align*}
\rho^2 \ft{\tth}_{\pm 1} + \rho \ft{\tth^2}_{\pm 1} + \frac{1}{3} \ft{\tth^3}_{\pm 1} = 3 r_0 e^{\pm i\theta_0}.
\end{align*}
In particular, since $\displaystyle{}\ft{fg}_j = \sum_{k\in\bbZ} \ft{f}_k \ft{g}_{j-k}$, we find that
\begin{align}
\ft{\tth}_1 &= \frac{3r_0 e^{i\theta_0}}{\rho^2} - \frac{1}{\rho} \sum_{k\in \bbZ} \ft{\tth}_k \ft{\tth}_{1-k} - \frac{1}{3\rho^2} \sum_{\ell,k\in\bbZ} \ft{\tth}_{k-\ell} \ft{\tth}_\ell \ft{\tth}_{1-k} \nonumber\\
&= \hspace{-1pt}\frac{3r_0 e^{i\theta_0}}{\rho^2} - \hspace{-2pt}\Big[\frac{2 \ft{\tth}_0}{\rho} \hspace{-1pt}+\hspace{-1pt} \frac{\|\tth\|^2_{L^2(\bbS^1)}}{3\rho^2} \hspace{-1pt}+\hspace{-1pt} \frac{2 |\ft{\tth}_0|^2}{3\rho^2} \Big]\ft{\tth}_1 - \big(\frac{1}{\rho} \hspace{-1pt}+\hspace{-1pt} \frac{2\ft{\tth}_0 \hspace{-1pt}+\hspace{-1pt} \ft{\tth}_1 \hspace{-1pt}+\hspace{-1pt} \ft{\tth}_{-1}}{3\rho^2} \big) \hspace{-2pt} \sum_{k\ne 0,1} \hspace{-2pt} \ft{\tth}_k \ft{\tth}_{1-k} \nonumber\\
&\quad - \frac{1}{3\rho^2} \hspace{-2pt}\sum_{|\ell-2|\ge 2} \hspace{-2pt}\ft{\tth}_{2-\ell} \ft{\tth}_\ell \ft{\tth}_{-1} - \frac{1}{3\rho^2} \sum_{k\ne 0,1,2} \sum_{|\ell-k|\ge 2} \ft{\tth}_{k-\ell} \ft{\tth}_\ell \ft{\tth}_{1-k}. \label{ft_h_1}
\end{align}
By (\ref{decay_estimate_temp}) and the Schwartz inequality,
\begin{align}
\sum_{k\ne 0,\pm 1} \ft{\tth}_k \ft{\tth}_{\pm 1-k} &\le \frac{C}{\D(t)} \Big[\|\tth_0\|_{H^{2.5}(\bbS^1)} + \triplenorm{\tth}_T^2 \Big] \sum_{k\in\bbZ} \frac{|\ft{\tth}_k|}{1+|k|^{2.5}} \nonumber\\
&\le \frac{C}{\D(t)} \Big[\|\tth_0\|_{H^{2.5}(\bbS^1)} + \triplenorm{\tth}_T^2 \Big] \|\tth\|_{L^2(\bbS^1)}. \label{ft_h_1_est1}
\end{align}
On the other hand, since $|\ft{\tth}_j| \le \|\tth\|_{L^2(\bbS^1)}$,
\begin{align}
\sum_{|\ell-2|\ge 2} \ft{\tth}_{2-\ell} \ft{\tth}_\ell \ft{\tth}_{-1} &\le \frac{C}{\D(t)} \Big[\|\tth_0\|_{H^{2.5}(\bbS^1)} + \triplenorm{\tth}_T^2 \Big] \sum_{|\ell-2|\ge 2} \frac{|\ft{\tth}_\ell| |\ft{\tth}_{-1}|}{1 + |\ell-2|^{2.5}} \nonumber\\
&\le \frac{C}{\D(t)} \Big[\|\tth_0\|_{H^{2.5}(\bbS^1)} + \triplenorm{\tth}_T^2 \Big] \|\tth\|^2_{L^2(\bbS^1)}, \label{ft_h_1_est2}
\end{align}
and similarly,
\begin{align}
&\sum_{k\ne 0,1,2} \sum_{|\ell-k|\ge 2} \ft{\tth}_{k-\ell} \ft{\tth}_\ell \ft{\tth}_{1-k} \nonumber\\
&\quad \le \frac{C}{\D(t)^2} \Big[\|\tth_0\|^2_{H^{2.5}(\bbS^1)} + \triplenorm{\tth}_T^4 \Big] \sum_{k\ne 0,1,2} \sum_{|\ell-k|\ge 2} \frac{|\ft{\tth}_\ell|}{(1+|k-\ell|^{2.5})(1+|k|^{2.5})} \nonumber\\
&\quad \le \frac{C}{\D(t)^2} \Big[\|\tth_0\|^2_{H^{2.5}(\bbS^1)} + \triplenorm{\tth}_T^4 \Big] \|\tth\|_{L^2(\bbS^1)}. \label{ft_h_1_est3}
\end{align}
Moreover, by assumption (\ref{assumption1}), $|\ft{\tth}_0| \le 2\pi \sigma \ll 1$ and $\|\tth\|_{L^2(\bbS^1)} \le \sqrt{2\pi} \sigma \ll 1$. As a consequence, (\ref{ft_h_1}) together with (\ref{ft_h_1_est1}), (\ref{ft_h_1_est2}) and (\ref{ft_h_1_est3}) implies that
\begin{align}
|\ft{\tth}_1| \hspace{-1pt}\le\hspace{-1pt} \frac{C}{\D(t)} \Big[\|\tth_0\|_{H^{2.5}(\bbS^1)} \hspace{-2pt}+ \triplenorm{\tth}^2_T \hspace{-2pt}+ \triplenorm{\tth}^4_T \Big]. \label{ft_h_1_decay_estimate}
\end{align}
A similar argument also suggests that $|\ft{\tth}_{-1}|$ shares the same upper bound as $|\ft{\tth}_1|$. Therefore, once again using the inequality 
$|\ft{\tth}_0|\le 2\pi \sigma \ll 1$, (\ref{ft_h_0}) together with (\ref{ft_h_1_est1}) implies that
\begin{align}
|\ft{\tth}_0| &\le C \rho^{-1} \Big[|\ft{\tth}_1 \ft{\tth}_{-1}| + \frac{1}{\D(t)} \Big(\|\tth_0\|_{H^{2.5}(\bbS^1)} + \triplenorm{\tth}_T^2 \Big) \|\tth\|_{L^2(\bbS^1)} \Big] \nonumber\\
&\le \frac{C}{\D(t)} \Big[\|\tth_0\|^2_{H^{2.5}(\bbS^1)} \hspace{-2pt}+ \triplenorm{\tth}_T^2 \hspace{-2pt}+ \triplenorm{\tth}_T^8 \Big]. \label{ft_h_0_decay_estimate}
\end{align}
Combining (\ref{decay_estimate_temp}) (or (\ref{decay_estimate_temp1})), (\ref{ft_h_1_decay_estimate}) and (\ref{ft_h_0_decay_estimate}), we conclude that
\begin{align}
&\|\tth\|_{H^{2.5}(\bbS^1)} = \Big[\sum_{k\in\bbZ} (1+|k|^5) |\ft{\tth}_k|^2\Big]^\frac{1}{2} \hspace{-2pt} \le \frac{C}{\D(t)} \Big[\|\tth_0\|^2_{H^{2.5}(\bbS^1)}\hspace{-2pt} + \triplenorm{\tth}_T^2 \hspace{-2pt}+ \triplenorm{\tth}_T^8 \Big]. \label{decay_estimate}
\end{align}
Furthermore, since
\begin{align*}
\int_0^\infty \frac{\rho(t)^{-3} + \rho(t) \rho^\pprime(t)^2}{\D(t)^2} \,dt < \infty
\end{align*}
for $\D$ in both the slow and fast injection cases, (\ref{elliptic_est1}) implies that
\begin{equation}\label{elliptic_est}
\begin{array}{l}
\displaystyle{} \int_0^\infty \hspace{-2pt} \rho(t)^{-3} \|\tth(t)\|^2_{H^{\rK+1.5}(\bbS^1)} dt \vspace{.2cm}\\
\displaystyle{} \qquad\quad \le C \int_0^\infty \hspace{-2pt} \rho(t)^3 \|v(t)\|^2_{H^{k-1}(B_1)} dt + C \Big[ \|\tth_0\|^2_{H^{2.5}(\bbS^1)} \hspace{-2pt} + \triplenorm{\tth}_T^4 \hspace{-2pt}+ \triplenorm{\tth}_T^{16} \Big].
\end{array}
\end{equation}


\section{Energy estimates in the space of higher regularity}\label{sec:high_norm_est}
 Let $\displaystyle{}\bp = \rT \cdot \nabla = \frac{\p}{\p \theta}$ denote the tangential derivative. 
 Tangentially differentiating (\ref{HS_ALE1}a) $\ell$-times ($\ell = 0,1,2,3,4,5$) and then testing the resulting equation against $\rJ \bp^\ell v$, we find that
\begin{align*}
& \int_{B_1} \rJ |\bp^\ell \ttv|^2 dx + \int_{B_1} \rJ \rA^j_i \bp^\ell \ttq,_j \bp^\ell \ttv^i dx = - \sum_{k=0}^{\ell-1} {{\ell}\choose{k}} \int_{B_1} \rJ \bp^{\ell-k} \rA^j_i \bp^k \ttq,_j \bp^\ell \ttv^i dx.
\end{align*}
Writing the second integral on the left-hand side as
\begin{align*}
& \int_{B_1} \rJ \rA^j_i \bp^\ell \ttq,_j \bp^\ell \ttv^i dx = \int_{B_1} \rJ \rA^j_i (\bp^\ell \ttq),_j \bp^\ell \ttv^i dx + \int_{B_1} \rJ \rA^j_i \big[\bp^\ell \ttq,_j - (\bp^\ell \ttq),_j\big] \bp^\ell \ttv^i dx,
\end{align*}
then integrating by parts in $x_j$ leads to 
\begin{align}
& \int_{B_1} \rJ |\bp^\ell \ttv|^2 dx + \int_{\bbS^1} \rJ \rA^j_i \bp^\ell \ttq \bp^\ell \ttv^i \rN_j dS = - \sum_{k=0}^{\ell-1} {{\ell}\choose{k}} \int_{B_1} \rJ \bp^{\ell-k} \rA^j_i \bp^k \ttq,_j \bp^\ell \ttv^i dx \nonumber\\
&\qquad\qquad + \int_{B_1} \rJ \rA^j_i \Big[\big[(\bp^\ell \ttq),_j - \bp^\ell \ttq,_j\big] \bp^\ell \ttv^i + \bp^\ell \ttq (\bp^\ell \ttv^i),_j \Big] dx. \label{energy_id1}
\end{align}
Using the  Kronecker delta symbol $\delta_{0\ell}$, which vanishes for all $\ell \ne 0$, by (\ref{A_est2}), (\ref{J-1_est}) and (\ref{ttq_in_terms_of_ttv1}) together with the continuous embedding $H^{\frac{p}{p-2}}(B_1) \contsubset L^p(B_1)$ we find that
\begin{align*}
& \Big|\int_{B_1} \rJ \rA^j_i \Big[\big[\bp^\ell \ttq,_j - (\bp^\ell \ttq),_j\big] \bp^\ell \ttv^i + \bp^\ell \ttq \big[(\bp^\ell \ttv^i),_j - \bp^\ell \ttv^i,_j\Big] dx\Big| \\
&\qquad\qquad\quad \le C \rho^2 (1-\delta_{0\ell}) \|\ttv\|_{H^{\ell-1}(B_1)} \|\ttv\|_{H^\ell(B_1)},
\end{align*}
as well as for $0\le k\le \ell-2$,
\begin{align*}
& \Big|\int_{B_1} \rJ \bp^{\ell-k} \rA^j_i \bp^k \ttq,_j \bp^\ell \ttv^i dx\Big| \le C \rho^2 \|D\rA\|_{W^{\ell-k-1,4}(B_1)} \|\nabla \ttq\|_{W^{k,4}(B_1)} \|\ttv\|_{H^\ell(B_1)} \\
&\qquad\qquad\quad \le C \rho \|\tth\|_{H^{\ell-k+1}(\bbS^1)} \|\ttv\|_{H^{k+0.5}(B_1)} \|\ttv\|_{H^\ell(B_1)}
\end{align*}
and for $k = \ell-1$,
\begin{align*}
& \Big|\int_{B_1} \rJ \bp^{\ell-k} \rA^j_i \bp^k \ttq,_j \bp^\ell \ttv^i dx\Big| \le C \rho^3 \|D\rA\|_{L^\infty(B_1)} \|\ttv\|_{H^{\ell-1}(B_1)} \|\ttv\|_{H^\ell(B_1)} \\
&\qquad\qquad\qquad \le C \rho \|\tth\|_{H^3(\bbS^1)} \|\ttv\|_{H^{\ell-1}(B_1)} \|\ttv\|_{H^\ell(B_1)}.
\end{align*}
Therefore, (\ref{ttvq_est}) suggests that
\begin{align}
&\int_{B_1} \hspace{-2pt} \rJ |\bp^\ell \ttv|^2 dx + \int_{\bbS^1} \rJ \rA^j_i \bp^\ell \ttq \bp^\ell \ttv^i \rN_j dS \nonumber\\
&\qquad\quad \le \int_{B_1} \hspace{-3pt} \rJ \rA^j_i \bp^\ell \ttq \bp^\ell \ttv^i,_j dx + C (1-\delta_{0\ell}) \rho^2 \|\ttv\|_{H^{\ell-1}(B_1)} \|\ttv\|_{H^\ell(B_1)} \label{energy_ineq1}\\
&\qquad\qquad + C (1-\delta_{0\ell}) \|\tth\|_{H^{\ell+1}(\bbS^1)} \|\ttv\|_{H^\ell(B_1)} \|\ttv\|_{H^1(B_1)} 
. \nonumber
\end{align}
By (\ref{HS_ALE1}b), $\displaystyle{}\rA^j_i \bp^\ell \ttv^i,_j = - \sum_{k=0}^{\ell-1} {{\ell}\choose{k}} \bp^{\ell-k} \rA^j_i \bp^k \ttv^i,_j $; thus for $\ell\ge 1$,
\begin{align*}
&\int_{B_1} \rJ \bp^\ell \ttq \rA^j_i \bp^\ell \ttv^i,_j dx \le C \rho^2 (1-\delta_{0\ell}) \Big[ \|\nabla \ttq\|_{H^{\ell-1}(\hspace{-0.5pt}B_1\hspace{-0.5pt})} \|D \rA\|_{L^\infty(\hspace{-0.5pt}B_1\hspace{-0.5pt})} \|\ttv\|_{H^\ell(\hspace{-0.5pt}B_1\hspace{-0.5pt})} \\
&\qquad\qquad\quad + \|\nabla\ttq\|_{H^{\ell-0.5}(\hspace{-0.5pt}B_1\hspace{-0.5pt})} \sum_{k=0}^{\ell-2} \|D \rA\|_{H^{\ell-k-0.5}(\hspace{-0.5pt}B_1\hspace{-0.5pt})} \|\ttv\|_{H^{k+1}(\hspace{-0.5pt}B_1\hspace{-0.5pt})} \Big] \\
&\ \le \hspace{-1pt} C \rho (1\hspace{-1pt}-\hspace{-1pt}\delta_{0\ell}) \Big[ \|\tth\|_{H^{\ell+1}(\bbS^1)} \|\ttv\|_{H^\ell(\hspace{-0.5pt}B_1\hspace{-0.5pt})} \|\ttv\|_{H^1(\hspace{-0.5pt}B_1\hspace{-0.5pt})} \hspace{-3pt} + \|\tth\|_{H^3(\bbS^1)} \|\ttv\|_{H^{\ell-1}(\hspace{-0.5pt}B_1\hspace{-0.5pt})} \|\ttv\|_{H^\ell(\hspace{-0.5pt}B_1\hspace{-0.5pt})} \Big].
\end{align*}

We now focus on the second integral of the left-hand side of (\ref{energy_ineq1}). By identity (\ref{JAtN}) and the boundary condition (\ref{HS_ALE1}c),
\begin{align*}
& \int_{\bbS^1} \rJ \rA^j_i \bp^\ell \ttq \bp^\ell \ttv^i \rN_j dS = \int_{\bbS^1} (\rho+\tth) (\bp^\ell \ttv \cdot \N) \bp^\ell \ttq dS \\
&\qquad = \int_{\bbS^1} (\rho + \tth) \Big[\bp^\ell (\ttv \cdot \N) - \sum_{k=0}^{\ell-1} {{\ell}\choose{k}} \bp^k \ttv \cdot \bp^{\ell-k} \N \Big] \bp^\ell \ttq\, dS \\
&\qquad = \int_{\bbS^1} (\rho + \tth) \bp^\ell (\ttv \cdot \N) \bp^\ell \Big[-\ttJ_\tth^{-3} \big[(\rho + \tth) \tth_{\theta\theta}\big] + \rho^\pprime \tth \Big] dS \\
&\qquad\quad + \int_{\bbS^1} (\rho + \tth) \bp^\ell (\ttv \cdot \N) \bp^\ell \Big[\ttJ_\tth^{-3} \big(\ttJ_\tth^2 + \tth_\theta^2 \big) - \rho^{-1} + \bar{\bar{\ttq}} \Big] dS \\
&\qquad\quad - \sum_{k=0}^{\ell-1} {{\ell}\choose{k}} \int_{\bbS^1} (\rho+\tth) \bp^k \ttv \cdot \bp^{\ell-k}\N \bp^\ell \ttq dS .
\end{align*}
By employing the $H^{0.5}(\bbS^1)$-$H^{-0.5}(\bbS^1)$ duality pairing,
\begin{align*}
& \Big|\sum_{k=0}^{\ell-1} \int_{\bbS^1} (\rho + \tth) \bp^k \ttv \hspace{-1pt}\cdot\hspace{-1pt} \bp^{\ell-k} \N \bp^\ell \ttq dS \Big| \hspace{-2pt}\le\hspace{-1pt} C (1-\delta_{0\ell}) \rho \Big[\|\ttv\|_{L^4(\bbS^1)} \|\bp^\ell \N\|_{L^2(\bbS^1)} \|\bp^\ell \ttq\|_{L^4(\bbS^1)} \\
&\qquad\qquad + (1-\delta_{0\ell}-\delta_{1\ell}) \|\bp^{\ell-1} \ttv\|_{H^{-0.5}(\bbS^1)} \|\bp^\ell \ttq\|_{H^{0.5}(\bbS^1)} \|\bp \N\|_{H^1(\bbS^1)} \\
&\qquad\qquad + (1-\delta_{0\ell}-\delta_{1\ell}-\delta_{2\ell}) \sum_{k=1}^{\ell-2} \|\bp^k \ttv\|_{H^{-0.5}(\bbS^1)} \|\bp^\ell \ttq\|_{H^{0.5}(\bbS^1)} \|\bp^{\ell-k}\N\|_{H^1(\bbS^1)} \Big] \\
&\qquad\quad \le C \rho^2 \Big[ \|\ttv\|_{H^\ell(B_1)} \|\ttv\|_{H^1(B_1)} \big[1 + \rho^{-1} \|\tth\|_{H^{\ell+1}(\bbS^1)}\big] \\
&\qquad\qquad\qquad + \|\ttv\|_{H^{\ell-1}(B_1)} \|\ttv\|_{H^\ell(B_1)} \big[1+ \rho^{-1} \|\tth\|_{H^4(\bbS^1)} \big]\Big]
\end{align*}
and by (\ref{qbb_est}),
\begin{align*}
& \Big|\int_{\bbS^1} (\rho + \tth) \bp^\ell (\ttv \cdot \N) \bp^\ell \bar{\bar{\ttq}} dS \Big| \le C \rho \|\ttv\cdot\N\|_{H^{\ell-0.5}(\bbS^1)} \|\bar{\bar{\ttq}}\|_{H^{\ell+0.5}(\bbS^1)} \\
&\qquad\qquad \le C \rho^\pprime \|\ttv\cdot \N\|_{H^{\ell-0.5}(\bbS^1)} \|\tth\|_{H^2(\bbS^1)} \|\tth\|_{H^{\ell+0.5}(\bbS^1)}.
\end{align*}
Moreover, since
$$\displaystyle{}\ttJ_\tth^{-1} - \rho^{-1} = - \frac{2\rho\tth + \tth^2 + \tth^2_\theta}{\ttJ_\tth \rho (\rho + \ttJ_\tth)}$$ and $$\displaystyle{}\frac{2(\rho+\tth)}{\ttJ_\tth(\rho + \ttJ_\tth)} - \frac{1}{\rho} = - \Big[ \frac{\tth(2\rho + \tth) + \tth_\theta^2}{\ttJ_\tth (\rho + \ttJ_\tth)^2} + \frac{\tth^2 + \tth^2_\theta}{\ttJ_\tth \rho (\rho + \ttJ_\tth)}\Big]\,,$$
we find that
\begin{align*}
& - \int_{\bbS^1} (\rho + \tth) \bp^\ell (\ttv\cdot \N) \bp^\ell \big[\ttJ_\tth^{-1} - \rho^{-1}\big] dS = \int_{\bbS^1} (\rho + \tth)\bp^\ell (\ttv \cdot \N) \bp^\ell \Big[\frac{2 \tth}{\ttJ_\tth (\rho + \ttJ_\tth)}\Big] \, dS \\
&\qquad  + \int_{\bbS^1} (\rho + \tth) \bp^\ell (\ttv\cdot \N) \bp^\ell \Big[\frac{\tth^2 + \tth_\theta^2}{\ttJ_\tth \rho (\rho + \ttJ_\tth)} \Big] dS \\
&\quad \le \int_{\bbS^1} \frac{2 (\rho+\tth)}{\ttJ_\tth (\rho + \ttJ_\tth)} \bp^\ell (\ttv\cdot \N) \bp^\ell \tth dS + C \rho^{-2} \|\tth\|_{H^{2.5}(\bbS^1)} \|\tth\|_{H^{\ell+1.5}(\bbS^1)} \|\ttv\|_{H^\ell(B_1)} \\
&\quad \le \frac{1}{\rho} \int_{\bbS^1} \bp^\ell (\ttv\cdot \N) \bp^\ell \tth\, dS + C \rho^{-2} \|\tth\|_{H^{2.5}(\bbS^1)} \|\tth\|_{H^{\ell+1.5}(\bbS^1)} \|\ttv\cdot \N\|_{H^{\ell-0.5}(\bbS^1)} \\
&\qquad + C \rho^{-2} \|\tth\|_{H^{2.5}(\bbS^1)} \|\tth\|_{H^{\ell+1.5}(\bbS^1)} \|\ttv\|_{H^\ell(B_1)} .
\end{align*}
Finally, because of the identity
\begin{align*}
\frac{(\rho+\tth)^2}{\ttJ_\tth^3} - \frac{1}{\rho} = - \frac{(\rho+\tth)^4 (2\rho + \tth)\tth + 3 (\rho+\tth)^4 \tth_\theta^2 + 3 (\rho+\tth)^2 \tth_\theta^4 + \tth_\theta^6}{\ttJ_\tth^3 \rho \big[\ttJ_\tth^3 + \rho (\rho+\tth)^2\big]}
\end{align*}
by the evolution equation (\ref{HS_ALE1}d) we obtain that
\begin{align*}
& \int_{\bbS^1} (\rho + \tth) \bp^\ell (\ttv \cdot \N) \bp^\ell \Big[-\ttJ_\tth^{-3} (\rho+\tth) \tth_{\theta\theta} + \rho^\pprime \tth \Big] dS \\
&\quad = - \int_{\bbS^1} \ttJ_\tth^{-3} (\rho+\tth)^2 \bp^\ell (\ttv\cdot \N) \bp^{\ell+2} \tth \, dS + \int_{\bbS^1} (\rho + \tth) \rho^\pprime \bp^\ell (\ttv\cdot \N) \bp^\ell \tth\, dS \\
&\qquad - \sum_{k=0}^{\ell-1} {{\ell}\choose{k}} \int_{\bbS^1} (\rho+\tth) \bp^\ell(\ttv\cdot \N) \bp^{\ell-k} \big[\ttJ_\tth^{-3}(\rho+h)\big] \bp^{k+1} \tth \, dS \\
&\quad \ge \frac{1}{2 \rho} \frac{d}{dt} \|\bp^{\ell+1} \tth\|^2_{L^2(\bbS^1)} + \int_{\bbS^1} (\rho+\tth) \rho^\pprime \bp^\ell (\ttv\cdot \N) \bp^\ell \tth\, dS \\
&\qquad - C \|\tth\|_{H^2(\bbS^1)} \|\ttv\cdot \N\|_{H^{\ell-0.5}(\bbS^1)} \Big[\rho^{-2} \|\tth\|_{H^{\ell+2.5}(\bbS^1)} + \rho^\pprime \|\tth\|_{H^{\ell+0.5}(\bbS^1)} \Big] \\
&\qquad - C \rho^{-1} \|\ttv\cdot \N\|_{H^{\ell-0.5}(\bbS^1)} \Big[\|\tth\|_{H^2(\bbS^1)} \|\tth\|_{H^{\ell+0.5}(\bbS^1)} + \|\tth\|^2_{H^2(\bbS^1)} \|\tth\|_{H^{\ell+1.5}(\bbS^1)} \Big].
\end{align*}
Since $\div (\rJ \rA^\rT \tv) = 0$, by the normal trace estimate,  we find that
\begin{align*}
& \|\ttv\cdot \N\|_{H^{-0.5}(\bbS^1)} = \|(\rho+\tth)^{-1} \rJ \rA^\rT \ttv \cdot \rN\|_{H^{-0.5}(\bbS^1)} \\
&\quad \le C \Big[\|(\rho+\tth)^{-1} \rJ \rA^\rT \ttv\|_{L^2(B_1)} + \|\nabla (\rho+\tth)^{-1} \rJ \rA^\rT \ttv\|_{L^2(B_1)} \Big] \le C \|\ttv\|_{L^2(B_1)}
\end{align*}
and similarly, for $0\le \ell \le \rK-1$,
\begin{align*}
& \|\ttv\cdot \N\|_{H^{\ell-0.5}(\bbS^1)} \le C \big(1+\rho^{-1} \triplenorm{\tth}_T\big) \|\ttv\|_{H^\ell(B_1)}.
\end{align*}
Therefore, (\ref{energy_ineq1}) implies that
\begin{align*}
& \int_{B_1} \hspace{-2pt} \rho \rJ |\bp^\ell \ttv|^2 dx \hspace{-1pt}+\hspace{-1pt} \frac{1}{2} \frac{d}{dt} \|\bp^{\ell+1}\tth\|^2_{L^2(\bbS^1)} + \int_{\bbS^1} \rho \rho^\pprime (\rho+\tth) \bp^\ell (\ttv\cdot \N) \bp^\ell \tth dS \\
& \qquad \le \int_{\bbS^1} \bp^\ell (\ttv\cdot \N) \bp^\ell \tth dS + C (1-\delta_{0\ell}) \rho^3 \|\ttv\|_{H^{\ell-1}(B_1)} \|\ttv\|_{H^\ell(B_1)} \\
& \qquad\quad + C \Big[\rho^{-1} \|\tth\|_{H^{\ell+2.5}(\bbS^1)} + \rho \rho^\pprime \|\tth\|_{H^{\ell+0.5}(\bbS^1)} \Big] \|\tth\|_{H^{2.5}(\bbS^1)} \|\ttv\|_{H^\ell(B_1)}.
\end{align*}
By Young's inequality, the inequality above further implies that
\begin{align}
&\rho^3 \|\bp^\ell \ttv\|^2_{L^2(B_1)} + \frac{d}{dt} \|\bp^{\ell+1} \tth\|^2_{L^2(\bbS^1)} + 2 \int_{\bbS^1} \rho \rho^\pprime (\rho + \tth) \bp^\ell (\ttv\cdot \N) \bp^\ell \tth dS \nonumber\\
&\qquad \le 2 \int_{\bbS^1} \bp^\ell (\ttv\cdot \N) \bp^\ell \tth dS + C (1-\delta_{0\ell}) \rho^3 \|\ttv\|^2_{H^{\ell-1}(B_1)} \hspace{-2pt} + \delta \rho^3 \|\ttv\|^2_{H^\ell(B_1)} \label{energy_est_temp} \\
&\qquad\quad + C_\delta \rho^{-1} \rho^{\pprime\hspace{1pt}2} \|\tth\|^2_{H^{2.5}(\bbS^1)} \|\tth\|^2_{H^{\ell+1}(\bbS^1)} + C_\delta \rho^{-5} \|\tth\|^2_{H^{2.5}(\bbS^1)} \|\tth\|^2_{H^{\ell+2.5}(\bbS^1)}. \nonumber
\end{align}

\subsection{The case $\ell=0$}
Using the $H^{0.5}(\bbS^1)$-$H^{-0.5}(\bbS^1)$ duality pairing,
\begin{align*}
& \Big|\int_{\bbS^1} (\rho \rho^\pprime (\rho+\tth) (\ttv\cdot \N) \tth dS \Big| \hspace{-2pt}+ \Big|\int_{\bbS^1} (\ttv\cdot \N) \tth dS\Big| \hspace{-1pt}\le C (\rho^2 \rho^\pprime + 1) \|\ttv\|_{L^2(B_1)} \|\tth\|_{H^{0.5}(\bbS^1)} \\
&\qquad\quad \le C_\delta \big(\rho \rho^{\pprime\hspace{1pt}2} + \rho^{-3}\big) \|\tth\|^2_{H^{2.5}(\bbS^1)} + \delta \rho^3 \|\ttv\|^2_{L^2(B_1)}.
\end{align*}
Since
$$
\int_0^\infty \Big[\frac{1}{\rho(s)^3 \D(s)^2} + \frac{\rho^\pprime(s)^2}{\rho(s) \D(s)^4}\Big] ds < \infty\,,
$$
choosing $\delta>0$ small enough and integrating in time of (\ref{energy_est_temp}) over the time interval $(0,t)$, by (\ref{decay_estimate}) we find that
\begin{align*}
& \|\bp \tth(t)\|^2_{L^2(\bbS^1)} + \int_0^t \rho(s)^2 \|\ttv(s)\|^2_{L^2(B_1)} ds \\
&\qquad \le C \int_0^t \rho(s) \rho^\pprime(s)^2 \|\tth(s)\|^2_{H^{2.5}(\bbS^1)} ds + C \Big[\|\tth_0\|^2_{H^{2.5}(\bbS^1)} \hspace{-2pt} + \triplenorm{\tth}_T^4 \hspace{-2pt}+ \triplenorm{\tth}_T^{16} \Big].
\end{align*}
Define
\begin{align}
\Nhh \equiv \|\tth_0\|^2_{H^\rK(\bbS^1)} \hspace{-2pt} + \triplenorm{\tth}_T^4 \P\big(\triplenorm{\tth}^2_T\big)\label{defn:Nh0hT}
\end{align}
for some polynomial function $\P$. If (\ref{rho_assumption1}) is satisfied, due to the exponential decay of $\|\tth\|_{H^{2.5}(\bbS^)}$,  it is easy to see that
\begin{align*}
\int_0^t \hspace{-2pt} \rho(s) \rho^\pprime(s)^2 \|\tth(s)\|^2_{H^{2.5}(\bbS^1)} ds \le C \Nhh.
\end{align*}
Now suppose that (\ref{rho_assumption2}) is satisfied. Then (\ref{rho_grows_algebraically}) and (\ref{decay_estimate}) imply that
\begin{align*}
& \int_0^t \rho(s) \rho^\pprime(s)^2 \|\tth(s)\|^2_{H^{2.5}(\bbS^1)} ds \le C \Big[\int_0^\infty \frac{\rho(s)^{1/\nu^-}}{(1+s)^2 \rho(s)} ds\Big] \Nhh \\
&\quad \le C \Big[\int_0^\infty \hspace{-2pt} \frac{(1+t)^{\frac{\nu^+}{\nu^-}}}{(1+t)^{2+\nu^-}}\, ds\Big] \Nhh \le C \Nhh;
\end{align*}
thus in either case,
\begin{equation}\label{L2_energy_est}
\|\bp \tth(t)\|^2_{L^2(\bbS^1)} + \int_0^t \rho(s)^3 \|\ttv(s)\|^2_{L^2(B_1)} ds \le C \Nhh.
\end{equation}

\subsection{The case $1\le \ell \le \rK-1$}
Define $\displaystyle{} \widetilde{\tth} = \tth + \frac{\tth^2}{2\rho}$\,. Then
\begin{align*}
\frac{\rho + \tth}{\rho}\, \ttv\cdot \N = \frac{\big[(\rho + \tth)^2\big]_t}{2\rho} = \frac{\big[\rho^2 + 2\rho\widetilde{\tth} \big]_t}{2\rho} = \widetilde{\tth}_t + \frac{\rho^\pprime}{\rho} \widetilde{\tth} + \rho^\pprime.
\end{align*}
Therefore, since $\|\ttv\|_{H^{1.5}(B_1)} \le C \rho^{-1} \Big[\|\tth\|_{H^4(\bbS^1)} + \rho^\pprime \|\tth\|_{H^2(\bbS^1)}\Big]$ by (\ref{ttvq_est}), we find that
\begin{align}
& \int_{\bbS^1} \rho \rho^\pprime (\rho + \tth) \bp^\ell (\ttv\cdot \N) \bp^\ell \tth dS \nonumber\\
&\qquad = \int_{\bbS^1} \rho \rho^\pprime (\rho+\tth) \bp^\ell (\ttv\cdot \N) \bp^\ell \widetilde{\tth} dS - \frac{1}{2} \int_{\bbS^1} \rho^\pprime (\rho+\tth) \bp^\ell (\ttv\cdot \N) \bp^\ell (\tth^2) dS \nonumber\\
&\qquad \ge \int_{\bbS^1} \rho \rho^\pprime \bp^\ell \big[(\rho+\tth)(\ttv\cdot \N)\big]\bp^\ell \widetilde{\tth} dS - \sum_{k=0}^{\ell-1} {{\ell}\choose{k}} \int_{\bbS^1} \rho \rho^\pprime \bp^{\ell-k} \tth \bp^k (\ttv\cdot \N) \bp^\ell \widetilde{\tth} dS \nonumber\\
&\qquad\quad - C \rho \rho^\pprime \|\tth\|_{H^2(\bbS^1)} \|\tth\|_{H^{\ell+1}(\bbS^1)} \|\ttv\|_{H^\ell(B_1)} \nonumber \\
&\qquad \ge \int_{\bbS^1} \rho^2 \rho^\pprime \bp^\ell \big(\widetilde{\tth}_t + \frac{\rho^\pprime}{\rho} \widetilde{\tth}\big) \bp^\ell \widetilde{\tth} dS - C_\delta \rho^{-1} \rho^{\pprime\hspace{1pt}2} \|\tth\|^2_{H^{2.5}(\bbS^1)} \|\tth\|^2_{H^{\ell+1}(\bbS^1)} \label{lower_order_energy_term}\\
&\qquad\quad - \delta \rho^3 \|\ttv\|^2_{H^\ell(B_1)} - C \rho^\pprime \Big[\|\tth\|_{H^4(\bbS^1)} + \rho^\pprime \|\tth\|_{H^2(\bbS^1)} \Big] \|\bp^\ell \tth\|^2_{L^2(\bbS^1)}. \nonumber
\end{align}
Since $\rho \sqrt{\rho^\pprime} \|\tth\|_{H^5(\bbS^1)} \le \triplenorm{\tth}_T$,
\begin{align}
& \int_0^\infty \Big[\rho^\pprime \|\tth(s)\|_{H^4(\bbS^1)} + \rho^\pprime(s)^2 \|\tth(s)\|_{H^2(\bbS^1)} \Big] \|\bp^\ell \tth\|^2_{L^2(\bbS^1)} ds \nonumber\\
&\qquad \le \int_0^\infty \Big[\rho^\pprime \|\tth(s)\|_{H^4(\bbS^1)} + \rho^\pprime(s)^2 \|\tth(s)\|_{H^2(\bbS^1)} \Big] \|\bp^\ell \widetilde{\tth}\|^2_{L^2(\bbS^1)} ds \nonumber\\
&\qquad\quad + C \int_0^\infty \frac{1}{\rho^2} \Big[\rho^\pprime \|\tth(s)\|_{H^4(\bbS^1)} + \rho^\pprime(s)^2 \|\tth(s)\|_{H^2(\bbS^1)} \Big] \|\bp^\ell (\tth^2)\|^2_{L^2(\bbS^1)} ds \nonumber\\
&\qquad \le C \Big[\int_0^\infty \hspace{-2pt}\frac{\|\tth(s)\|_{H^4(\bbS^1)} + \rho^\pprime(s) \|\tth(s)\|_{H^2(\bbS^1)} }{\rho(s)^2} ds \Big] \triplenorm{\tth}^2_T \label{worst_error}\\
&\qquad\quad + C \Nhh \nonumber
\end{align}
By interpolation,
\begin{align*}
\|\tth(t)\|_{H^4(\bbS^1)} \le C \|\tth\|^{\frac{\rK-4}{\rK-2.5}}_{H^{2.5}(\bbS^1)} \|\tth\|^{\frac{1.5}{\rK-2.5}}_{H^\rK(\bbS^1)} \le C \D(t)^{-(\frac{\rK-4}{\rK-2.5})} \triplenorm{\tth}_T.
\end{align*}
If (\ref{rho_assumption1}) is satisfied, due to the exponential decay it is easy to see that
\begin{align}
\int_0^\infty \hspace{-2pt}\frac{\|\tth(s)\|_{H^4(\bbS^1)} + \rho^\pprime(s) \|\tth(s)\|_{H^2(\bbS^1)} }{\rho(s)^2} ds \le C \triplenorm{\tth}_T. \label{worst_error_est}
\end{align}
Suppose that (\ref{rho_assumption2}) is satisfied. Then
\begin{align*}
&\int_0^\infty \hspace{-2pt}\frac{\|\tth(s)\|_{H^4(\bbS^1)} + \rho^\pprime(s) \|\tth(s)\|_{H^2(\bbS^1)} }{\rho(s)^2} ds \\
&\qquad \le C \Big[\int_0^\infty \hspace{-4pt}\Big(\frac{1}{\rho(s)^{2 + (2-\frac{1}{2\nu^-})(\frac{\rK-4}{\rK-2.5})}} + \frac{1}{(1+s) \rho(s)^{3-\frac{1}{2\nu^-}}}\Big)ds\Big] \triplenorm{\tth}_T \\
&\qquad \le C \Big[\int_0^\infty \hspace{-2pt}\frac{1}{(1+s)^{2\nu^- + (2\nu^--\frac{1}{2})(\frac{\rK-4}{\rK-2.5})}} ds\Big] \triplenorm{\tth}_T + C \triplenorm{\tth}_T.
\end{align*}
By the definition of $\rK$, the exponent of the integrand $\displaystyle{}2\nu^- + \big(2\nu^--\frac{1}{2}\big)\big(\frac{\rK-4}{\rK-2.5}\big) > 1$ if $\nu^- > 3/8$, so (\ref{worst_error_est}) is still valid if (\ref{rho_assumption2}) holds. Therefore, (\ref{worst_error}) implies that
\begin{align*}
& \int_0^\infty \Big[\rho^\pprime \|\tth(s)\|_{H^4(\bbS^1)} + \rho^\pprime(s)^2 \|\tth(s)\|_{H^2(\bbS^1)} \Big] \|\bp^\ell \tth\|^2_{L^2(\bbS^1)} ds \\
&\qquad\quad \le C \triplenorm{\tth}^3_T + C \Nhh \le C_{\delta_1} \Nhh + \delta_1 \triplenorm{\tth}^2_T.
\end{align*}
Since
\begin{align*}
\int_{\bbS^1} \rho^2 \rho^\pprime \bp^\ell \big(\widetilde{\tth}_t + \frac{\rho^\pprime}{\rho} \widetilde{\tth}\big) \bp^\ell \widetilde{\tth} dS = \frac{1}{2} \frac{d}{dt} \Big[\rho^2 \rho^\pprime \|\bp^\ell \widetilde{\tth}\|^2_{L^2(\bbS^1)}\Big] - \frac{1}{2} \rho^2 \rho^{\pprime\prime} \|\bp^\ell \widetilde{\tth}\|^2_{L^2(\bbS^1)},
\end{align*}
the combination of (\ref{energy_est_temp}), (\ref{lower_order_energy_term}), (\ref{worst_error}) and (\ref{worst_error_est}) suggests that
\begin{align*}
&\|\bp^{\ell+1} \tth\|^2_{L^2(\bbS^1)} + \rho^2 \rho^\pprime \|\bp^\ell \widetilde{\tth}\|^2_{L^2(\bbS^1)} + \int_0^t \rho^3 \|\bp^\ell \ttv\|^2_{L^2(B_1)} ds - \int_0^t \rho^2 \rho^{\pprime\prime} \|\bp^\ell \widetilde{\tth}\|^2_{L^2(\bbS^1)} ds \nonumber\\
&\qquad\quad \le \|\bp^\ell\tth \|^2_{L^2(\bbS^1)} + C_{\delta,\delta_1} \Nhh \\
&\qquad\qquad + C_\delta \int_0^t \rho^3 \|\ttv\|^2_{H^{\ell-1}(B_1)} ds + \delta \int_0^t \rho^3 \|\ttv\|^2_{H^\ell(B_1)} ds + \delta_1 \triplenorm{\tth}^2_T \nonumber
\end{align*}
for some constant $0<c<1$.

On the other hand, for $0 \le \ell \le \rK-1$, assumption (\ref{assumption1}) and condition (\ref{rho_assumption1}) or (\ref{rho_assumption2}) imply that
\begin{align*}
& \rho^2 \rho^\pprime \|\bp^\ell \tth\|^2_{L^2(\bbS^1)} \le 2 \rho^2 \rho^\pprime \Big[\|\bp^\ell \widetilde{\tth}\|^2_{L^2(\bbS^1)} + \frac{1}{4\rho^2} \|\bp^\ell (\tth^2)\|^2_{L^2(\bbS^1)} \Big] \\
&\qquad \le 2 \rho^2 \rho^\pprime \|\bp^\ell \widetilde{\tth}\|^2_{L^2(\bbS^1)} + C \rho^\pprime \|\tth\|^2_{H^1(\bbS^1)} \|\bp^\ell \tth\|^2_{L^2(\bbS^1)} \\
&\qquad \le 2 \rho^3 \rho^\pprime \|\bp^\ell \widetilde{\tth}\|^2_{L^2(\bbS^1)} + C \sigma \triplenorm{\tth}^2_T
\end{align*}
which, in turn,  implies that
\begin{align}
&\|\bp^{\ell+1} \tth\|^2_{L^2(\bbS^1)} + \frac{1}{2}\hspace{1pt} \rho^2 \rho^\pprime \|\bp^\ell \tth\|^2_{L^2(\bbS^1)} + \hspace{-2pt} \int_0^t \rho^3 \|\bp^\ell \ttv\|^2_{L^2(B_1)} ds - \hspace{-2pt}\int_0^t \rho^2 \rho^{\pprime\prime} \|\bp^\ell \widetilde{\tth}\|^2_{L^2(\bbS^1)} ds \nonumber\\
&\qquad\quad \le \|\bp^\ell\tth \|^2_{L^2(\bbS^1)} + C_{\delta,\delta_1} \Nhh \label{energy_est_temp1}\\
&\qquad\qquad + C_\delta \int_0^t \rho^3 \|\ttv\|^2_{H^{\ell-1}(B_1)} ds + \delta \int_0^t \rho^3 \|\ttv\|^2_{H^\ell(B_1)} ds + 2 \delta_1 \triplenorm{\tth}^2_T \nonumber
\end{align}
if $C \sigma$ is chosen to be smaller than $2 \delta_1$.

If $\ell = 1$, by (\ref{L2_energy_est}) we find that
\begin{align}
&\|\bp^2 \tth\|^2_{L^2(\bbS^1)} + \frac{1}{2}\hspace{1pt} \rho^2 \rho^\pprime \|\bp \tth\|^2_{L^2(\bbS^1)} + \int_0^t \rho^3 \|\bp \ttv\|^2_{L^2(B_1)} ds - \hspace{-2pt}\int_0^t \rho^2 \rho^{\pprime\prime} \|\bp \widetilde{\tth}\|^2_{L^2(\bbS^1)} ds \nonumber\\
&\qquad\quad \le C_{\delta_1} \Nhh + \delta \int_0^t \rho^3 \|\ttv\|^2_{H^\ell(B_1)} ds + 2\delta_1 \triplenorm{\tth}^2_T \nonumber
\end{align}
which, combined with Lemma \ref{lem:hodge} and (\ref{L2_energy_est}), implies that
\begin{align*}
&\|\tth\|^2_{H^2(\bbS^1)} + \frac{1}{2}\hspace{1pt} \rho^2 \rho^\pprime \|\tth\|^2_{H^1(\bbS^1)} + c \int_0^t \rho^3 \| \ttv\|^2_{H^1(B_1)} ds - \hspace{-2pt}\int_0^t \rho^2 \rho^{\pprime\prime} \|\widetilde{\tth}\|^2_{H^1(\bbS^1)} ds \nonumber\\
&\qquad\qquad\qquad \le C_{\delta_1} \Nhh + 2 \delta_1 \triplenorm{\tth}^2_T
\end{align*}
for some constant $0<c<1$. A similar argument, again relying on Lemma \ref{lem:hodge},  shows that
\begin{align*}
&\|\tth\|^2_{H^\rK(\bbS^1)} + \frac{1}{2}\hspace{1pt} \rho^2 \rho^\pprime \|\tth\|^2_{H^{\rK-1}(\bbS^1)} + c \int_0^t \rho^3 \|\ttv\|^2_{H^{\rK-1}(B_1)} ds - \hspace{-2pt} \int_0^t \rho^2 \rho^{\pprime\prime} \|\widetilde{\tth}\|^2_{H^{\rK-1}(\bbS^1)} ds \nonumber\\
&\qquad\qquad\qquad \le C_{\delta_1} \Nhh + C \delta_1 \triplenorm{\tth}^2_T.
\end{align*}

Finally, we look for an upper bound of $\displaystyle{}\int_0^t \rho^2 \rho^{\pprime\prime} \|\widetilde{\tth}\|^2_{H^{\rK-1}(\bbS^1)} ds$ to close the energy estimates. If (\ref{rho_assumption1}) is satisfied, by interpolation and the inequality $\|\widetilde{\tth}\|_{H^6(\bbS^1)} \le C \|\tth\|_{H^6(\bbS^1)}$,
\begin{align*}
& \Big|\int_0^t \rho(s)^2 \rho^\pprime(s) \|\widetilde{\tth}(s)\|^2_{H^5(\bbS^1)} ds\Big| \le C \int_0^\infty \frac{\rho(s)^3}{1+s} \|\widetilde{\tth}(s)\|^{4/7}_{H^{2.5}(\bbS^1)} \|\widetilde{\tth}(s)\|^{10/7}_{H^6(\bbS^1)} ds \\
&\quad \le C \Big[\hspace{-1pt}\int_0^\infty \hspace{-2pt}\frac{\rho(s)^{13/7}}{1+s} e^{-4\beta d(s)/7} ds \Big] \Nhh \hspace{-1pt}\le C \Nhh.
\end{align*}
Now suppose that (\ref{rho_assumption2}) is satisfied. If $\rho^{\pprime\prime} \le 0$, then
\begin{equation}\label{energy_estimate}
\begin{array}{l}
\displaystyle{}\|\tth(t)\|^2_{H^\rK(\bbS^1)} + \frac{1}{2}\hspace{1pt} \rho(t)^2 \rho^\pprime(t) \|\widetilde{\tth}(t)\|^2_{H^{\rK-1}(\bbS^1)} + \int_0^t \rho(s)^3 \|\ttv(s)\|^2_{H^{\rK-1}(B_1)} ds \vspace{.2cm}\\
\displaystyle{}\qquad\qquad\qquad \le C_{\delta_1} \Nhh + C \delta_1 \triplenorm{\tth}^2_T.
\end{array}
\end{equation}
If $\log\rho^\pprime$ has small total variation, then for some $\delta_2 \ll 1$,
\begin{align*}
\big\|(\log \rho^\pprime)^\pprime\big\|_{L^1(0,\infty)} = \int_0^\infty \Big|\frac{\rho^{\pprime\prime}(s)}{\rho^\pprime(s)}\Big| ds \le \delta_2.
\end{align*}
Since $\rho \sqrt{\rho^\pprime} \hspace{1pt}\|\widetilde{\tth}\|_{H^{\rK-1}(\bbS^1)} \le \triplenorm{\tth}_T$,
\begin{align*}
& \Big|\int_0^t \rho(s)^2 \rho^\pprime(s) \|\widetilde{\tth}(s)\|^2_{H^{\rK-1}(\bbS^1)} ds\Big| \le \Big[\int_0^\infty \frac{|\rho^{\pprime\prime}(s)|}{\rho^\pprime(s)} ds \Big] \triplenorm{\tth}^2_T \le \delta_2 \triplenorm{\tth}^2_T;
\end{align*}
thus we obtain (\ref{energy_estimate}) again with $\delta_1$ replaced by $\delta_1 + \delta_2$.

\begin{remark}
By the energy estimate {\rm(\ref{energy_estimate})}, we see that the quantity $\|\widetilde{\tth}\|_{H^{\rK-1}(\bbS^1)}$ decays at the rate $\displaystyle{}\frac{1}{\rho \sqrt{\rho^\pprime}}$. This decay rate is slower than the decay of $\|\tth\|_{H^{2.5}(\bbS^1)}$ if $\rho$ grows algebraically.  Thus, our approach of establishing
decay in the lower-order norm via
Duhamel's principle provides better decay than energy estimates alone.
\end{remark}

\section{The stability of the Hele-Shaw flow with injection}\label{sec:stability_result}
Combining (\ref{decay_estimate}), (\ref{elliptic_est}) and (\ref{energy_estimate}), by choosing $\delta_1 > 0$ small enough we conclude that
\begin{align*}
\triplenorm{h}^2_T \le C \Nhh
\end{align*}
or
\begin{align}
\triplenorm{\tth}_T \le C \Big[\|\tth_0\|_{H^\rK(\bbS^1)} + \triplenorm{\tth}^2_T + \triplenorm{\tth}^{2p}_T\Big] \label{energy_inequality}
\end{align}
for some $p\in \bbN$, provided that condition (\ref{rho_assumption1}) or (\ref{rho_assumption2}), as well as the assumption (\ref{assumption1}), are valid. 
The constant $ \sigma $ is chosen sufficiently small so as to absorb certain error terms on the right-hand side of our energy estimates by the left-hand side
energy terms.   In other words, as long as all of the constants we use in our elliptic estimates and the Sobolev embedding theorem  are fixed, the maximum of $\sigma$ is a fixed  computable number which can be chosen independent of $\|\tth_0\|_{H^6(\bbS^1)}$.  Inequality (\ref{energy_inequality}) then implies that there exists $\epsilon > 0$ small enough such that if $\|\tth_0\|_{H^\rK(\bbS^1)} \le \epsilon$, by the continuity (in time) of $\tth$,
\begin{align}
\triplenorm{\tth}_T \ll \sigma. \label{total_norm_bound}
\end{align}
This suggests that as long as the solution exists, $\tth$ has to satisfy the estimate (\ref{total_norm_bound}), and this establishes Theorem \ref{thm:main_thm1} and \ref{thm:main_thm2}.

\vspace{.1in}

\noindent {\bf Acknowledgments.} AC was supported by the National Science Council (Taiwan) under grant 100-2115-M-008-009-MY3.
DC was supported by the Centre for Analysis and Nonlinear PDEs funded by the UK EPSRC grant EP/E03635X and the Scottish Funding Council. 
SS was supported by the National Science Foundation under grant DMS-1001850.

\bibliography{Reference}
\bibliographystyle{plain}


\end{document}